\newtheorem{theorem}{Theorem}[section]
\newtheorem{lemma}[theorem]{Lemma}
\newtheorem{proposition}[theorem]{Proposition}
\newtheorem{corollary}[theorem]{Corollary}
\newtheorem{example}[theorem]{Example}
\theoremstyle{definition}}
\theoremstyle{definition}\newtheorem{definition}[theorem]{Definition}}
\theoremstyle{definition}}
\numberwithin{equation}{section}
\def\epsilon{\varepsilon}
\def\kappa{\varkappa}
\def\phi{\varphi}
\def\leq{\leqslant}
\def\geq{\geqslant}
\def\dim{{\rm dim}\,}
\def\ker{\hbox{\tt ker}\,}
\def\im{\hbox{\tt im}\,}
\def\d{\hbox{\tt dim}\,}
\def\lll{\langle}
\def\rrr{\rangle}
\def\o{\otimes}
\def\ep{\epsilon}
\def\PICONE{{\hbox{
\begin{pspicture}(0,0)(7,2.1)
\psline[linewidth=1.2pt](0.2,1.4)(0.5,2.1)(1.4,0)
\psline[linewidth=1.2pt](0.8,1.4)(0.5,0.7)
\psline[linewidth=1.2pt](1.1,0.7)(0.8,0)
\put(1.5,0){,}
\psline[linewidth=1.2pt](1.7,0)(2.3,1.4)(2.9,0)
\psline[linewidth=1.2pt](2,0.7)(2.2,0)
\psline[linewidth=1.2pt](2.6,0.7)(2.4,0)
\put(3,0){,}
\psline[linewidth=1.2pt](3.2,0)(4.1,2.1)(4.4,1.4)
\psline[linewidth=1.2pt](3.8,1.4)(4.1,0.7)
\psline[linewidth=1.2pt](3.5,0.7)(3.8,0)
\put(4.5,0){,}
\psline[linewidth=1.2pt](5.6,0)(5,1.4)(5.3,2.1)(5.6,1.4)
\psline[linewidth=1.2pt](5.3,0.7)(5,0)
\psline[linewidth=1.2pt](4.7,0.7)(5,1.4)
\put(5.7,0){,}
\psline[linewidth=1.2pt](6,0)(6.6,1.4)(6.3,2.1)(6,1.4)
\psline[linewidth=1.2pt](6.6,1.4)(6.9,0.7)
\psline[linewidth=1.2pt](6.3,0.7)(6.6,0)
\end{pspicture}
}}}
\def\PICTWO{{\hbox{
\begin{pspicture}(0,0)(2.2,2.1)
\psline[linewidth=1.8pt](0,0)(0.2,0.7)(0.4,0)
\psline[linewidth=1.8pt](1.8,0)(2,0.7)(2.2,0)
\psline[linewidth=1.8pt](0.6,0)(0.8,0.7)(1,0)
\psline[linewidth=1.8pt](1.6,0)(1.4,0.7)(1.2,0)
\psline[linewidth=1.2pt](0,0)(0.2,0.7)(0.5,1.4)(1.1,2.1)(1.7,1.4)(2,0.7)(2.2,0)
\psline[linewidth=1.2pt](0.2,0.7)(0.4,0)
\psline[linewidth=1.2pt](2,0.7)(1.8,0)
\psline[linewidth=1.2pt](0.5,1.4)(0.8,0.7)(1,0)
\psline[linewidth=1.2pt](0.8,0.7)(0.6,0)
\psline[linewidth=1.2pt](1.7,1.4)(1.4,0.7)(1.2,0)
\psline[linewidth=1.2pt](1.4,0.7)(1.6,0)
\end{pspicture}
}}}
\def\PICTHREE{{\hbox{
\begin{pspicture}(0,-.5)(7.5,3)
\psline[linewidth=1.2pt](0,0)(0.1,0.7)(0.3,1.4)(0.7,2.1)(1.5,2.8)(2.3,2.1)(2.7,1.4)(2.9,0.7)(3,0)
\psline[linewidth=1.2pt](0.1,0.7)(0.2,0)
\psline[linewidth=1.2pt](2.9,0.7)(2.8,0)
\psline[linewidth=1.2pt](0.3,1.4)(0.5,0.7)(0.6,0)
\psline[linewidth=1.2pt](0.5,0.7)(0.4,0)
\psline[linewidth=1.2pt](0.7,2.1)(1.1,1.4)(1.3,0.7)(1.4,0)
\psline[linewidth=1.2pt](1.1,1.4)(0.9,0.7)(0.8,0)
\psline[linewidth=1.2pt](0.9,0.7)(1,0)
\psline[linewidth=1.2pt](2.3,2.1)(1.9,1.4)(1.7,0.7)(1.6,0)
\psline[linewidth=1.2pt](1.7,0.7)(1.8,0)
\psline[linewidth=1.2pt](1.3,0.7)(1.2,0)
\psline[linewidth=1.2pt](1.9,1.4)(2.1,0.7)(2.2,0)
\psline[linewidth=1.2pt](2.1,0.7)(2,0)
\psline[linewidth=1.2pt](2.7,1.4)(2.5,0.7)(2.4,0)
\psline[linewidth=1.2pt](2.5,0.7)(2.6,0)
\put(1.2,-0.3){{\Large.\ .\ .}}
\put(0.3,2.5){$T_n$}
\psline[linewidth=1.2pt](4.3,1.5)(4.6,0.5)(4,0.5)(4.3,1.5)(5.1,1.5)(4.8,0.5)(5.4,0.5)(5.1,1.5)(4.7,2.5)(4.3,1.5)
\psline[linewidth=1.2pt](4.7,2.5)(6.4,2.5)(6,1.5)(5.7,0.5)(6.3,0.5)(6,1.5)(6.8,1.5)(6.5,0.5)(7.1,0.5)(6.8,1.5)(6.4,2.5)
\put(7.2,2){${\rm GR}(T_n)$}
\put(5.2,0.2){{\Large.\ .\ .}}
\pscircle[linewidth=2pt](4,0.5){0.07}
\pscircle[linewidth=2pt](4.6,0.5){0.07}
\pscircle[linewidth=2pt](4.3,1.5){0.07}
\pscircle[linewidth=2pt](4.8,0.5){0.07}
\pscircle[linewidth=2pt](5.4,0.5){0.07}
\pscircle[linewidth=2pt](5.1,1.5){0.07}
\pscircle[linewidth=2pt](5.7,0.5){0.07}
\pscircle[linewidth=2pt](6.3,0.5){0.07}
\pscircle[linewidth=2pt](6,1.5){0.07}
\pscircle[linewidth=2pt](6.5,0.5){0.07}
\pscircle[linewidth=2pt](7.1,0.5){0.07}
\pscircle[linewidth=2pt](6.8,1.5){0.07}
\pscircle[linewidth=2pt](4.7,2.5){0.07}
\pscircle[linewidth=2pt](6.4,2.5){0.07}
\end{pspicture}
}}}
\def\PICFOUR{{\hbox{
\begin{pspicture}(0,0)(2.2,2.1)
\psline[linewidth=1.2pt](0,0)(0.2,0.7)(0.5,1.4)(1.1,2.1)(1.7,1.4)(2,0.7)(2.2,0)
\psline[linewidth=1.2pt](0.2,0.7)(0.4,0)
\psline[linewidth=1.2pt](2,0.7)(1.8,0)
\psline[linewidth=1.2pt](0.5,1.4)(0.8,0.7)(1,0)
\psline[linewidth=1.2pt](0.8,0.7)(0.6,0)
\psline[linewidth=1.2pt](1.7,1.4)(1.4,0.7)(1.2,0)
\psline[linewidth=1.2pt](1.4,0.7)(1.6,0)
\put(0.3,1){$\!\!\!\!\!\!\!\!\!T_3$}
\psline[linewidth=1.2pt](3.7,1.5)(3.3,0.5)(4.1,0.5)(3.7,1.5)(5.4,1.5)(5,0.5)(5.8,0.5)(5.4,1.5)
\put(5.8,1.4){${\rm GR}(T_3)$}
\pscircle[linewidth=2pt](3.3,0.5){0.07}
\pscircle[linewidth=2pt](4.1,0.5){0.07}
\pscircle[linewidth=2pt](5,0.5){0.07}
\pscircle[linewidth=2pt](5.8,0.5){0.07}
\pscircle[linewidth=2pt](3.7,1.5){0.07}
\pscircle[linewidth=2pt](5.4,1.5){0.07}
\end{pspicture}
}}}
\def\PICFIVE{{\hbox{
\begin{pspicture}(0,-0.5)(11,2)
\put(0,0.1){\Huge H(}
\pscircle[linewidth=2pt](1,0){0.06}
\pscircle[linewidth=2pt](1.8,0){0.06}
\pscircle[linewidth=2pt](2.4,0){0.06}
\pscircle[linewidth=2pt](3.2,0){0.06}
\pscircle[linewidth=2pt](1.4,0.8){0.06}
\pscircle[linewidth=2pt](2.8,0.8){0.06}
\psline[linewidth=1.2pt](1.4,0.8)(1,0)(1.8,0)(1.4,0.8)(2.8,0.8)(2.4,0)(3.2,0)(2.8,0.8)
\put(3.3,0.1){\Huge )\ =H(}
\put(0.9,-0.3){$a$}
\put(1.7,-0.3){$b$}
\put(1.3,0.9){$c$}
\pscircle[linewidth=2pt](5.5,0){0.06}
\pscircle[linewidth=2pt](6.3,0){0.06}
\pscircle[linewidth=2pt](5.9,0.8){0.06}
\psline[linewidth=1.2pt](5.5,0)(6.3,0)(5.9,0.8)(5.5,0)
\put(6.4,0.1){\Huge )\ +\ H(}
\pscircle[linewidth=2pt](8.9,0){0.06}
\pscircle[linewidth=2pt](9.6,0){0.06}
\psline[linewidth=1.2pt](8.9,0)(9.6,0)
\put(9.7,0.1){{\Huge )}\Large\ =\ 2+1\ =\ 3}
\put(7.5,-0.45){$c$ eliminated}
\put(4.4,-0.45){$b$ eliminated}
\end{pspicture}
}}}
\def\PICTEN{{\hbox{
\begin{pspicture}(0,-0.5)(7,2)
\put(0,0.1){\Huge w\ =}
\psline[linewidth=1.2pt](2,0)(2.3,0.6)(2.6,0)
\psline[linewidth=1.2pt](3,0)(3.3,0.6)(3.6,0)
\psline[linewidth=1.2pt](4,0)(4.3,0.6)(4.6,0)
\psline[linewidth=1.2pt](6,0)(6.3,0.6)(6.6,0)
\put(4.7,0.5){\Huge .\ .\ .}
\put(6.5,0.5){$n$}
\put(1.9,0.5){$1$}
\put(2.9,0.5){$2$}
\put(3.9,0.5){$3$}
\psline[linewidth=1.2pt](2.3,0.6)(3.5,1.7)
\psline[linewidth=1.2pt](3.3,0.6)(3.5,1.7)
\psline[linewidth=1.2pt](4.3,0.6)(3.5,1.7)
\psline[linewidth=1.2pt](6.3,0.6)(3.5,1.7)
\end{pspicture}
}}}
\def\PICELEVEN{{\hbox{
\begin{pspicture}(0,-0.5)(9,2)
\put(0,0.1){\Huge w\ =}
\psline[linewidth=1.2pt](2,0)(2.3,0.6)(2.6,0)
\psline[linewidth=2.1pt](3,0)(3.3,0.6)(3.6,0)
\psline[linewidth=2.1pt](4,0)(4.3,0.6)(4.6,0)
\psline[linewidth=2.1pt](6,0)(6.3,0.6)(6.6,0)
\psline[linewidth=1.2pt](8,0)(8.3,0.6)(8.6,0)
\put(4.7,0.5){\Huge .\ .\ .}
\put(6.7,0.5){\Huge .\ .\ .}
\put(6.42,0.5){$i_k$}
\put(8.5,0.5){$n$}
\put(1.9,0.5){$1$}
\put(2.9,0.5){$i_1$}
\put(3.9,0.5){$i_2$}
\psline[linewidth=1.2pt](2.3,0.6)(3.5,1.7)
\psline[linewidth=2.1pt](3.3,0.6)(3.5,1.7)
\psline[linewidth=2.1pt](4.3,0.6)(3.5,1.7)
\psline[linewidth=2.1pt](6.3,0.6)(3.5,1.7)
\psline[linewidth=1.2pt](8.3,0.6)(3.5,1.7)
\end{pspicture}
}}}
\def\PICTWELVE{{\hbox{
\begin{pspicture}(0,-0.5)(7,2)
\put(0,0.1){\Huge w\ =}
\psline[linewidth=1.2pt](2,0)(2.3,0.6)(2.6,0)
\psline[linewidth=1.2pt](3,0)(3.3,0.6)(3.6,0)
\psline[linewidth=1.2pt](4,0)(4.3,0.6)(4.6,0)
\psline[linewidth=1.2pt](5,0)(5.3,0.6)(5.6,0)
\put(4.9,0.45){$4$}
\put(1.9,0.5){$1$}
\put(2.9,0.5){$2$}
\put(3.9,0.5){$3$}
\psline[linewidth=1.2pt](2.3,0.6)(3.5,1.7)
\psline[linewidth=1.2pt](3.3,0.6)(3.5,1.7)
\psline[linewidth=1.2pt](4.3,0.6)(3.5,1.7)
\psline[linewidth=1.2pt](5.3,0.6)(3.5,1.7)
\end{pspicture}
}}}
\def\PICTHIRTEEN{{\hbox{
\begin{pspicture}(0,0)(8,2.1)
\psline[linewidth=1.2pt](0.2,1.4)(0.5,2.1)(1.4,0)
\psline[linewidth=1.2pt](0.8,1.4)(0.5,0.7)
\psline[linewidth=1.2pt](1.1,0.7)(0.8,0)
\psline[linewidth=1.2pt](2,0)(2.6,1.4)(2.3,2.1)(2,1.4)
\psline[linewidth=1.2pt](2.6,1.4)(2.9,0.7)
\psline[linewidth=1.2pt](2.3,0.7)(2.6,0)
\psline[linewidth=1.2pt](3.5,0)(4.4,2.1)(4.7,1.4)
\psline[linewidth=1.2pt](4.1,1.4)(4.4,0.7)
\psline[linewidth=1.2pt](3.8,0.7)(4.1,0)
\psline[linewidth=1.2pt](6.2,0)(5.6,1.4)(5.9,2.1)(6.2,1.4)
\psline[linewidth=1.2pt](5.9,0.7)(5.6,0)
\psline[linewidth=1.2pt](5.3,0.7)(5.6,1.4)
\psline[linewidth=1.2pt](6.7,0)(7.3,1.4)(7.9,0)
\psline[linewidth=1.2pt](7,0.7)(7.2,0)
\psline[linewidth=1.2pt](7.6,0.7)(7.4,0)
\end{pspicture}
}}}
\def\PICFOURTEEN{{\hbox{
\begin{pspicture}(0,0)(2.2,2.1)
\psline[linewidth=1.2pt](0,0)(0.2,0.7)(0.5,1.4)(1.1,2.1)(1.7,1.4)(2,0.7)(2.2,0)
\psline[linewidth=1.2pt](0.2,0.7)(0.4,0)
\psline[linewidth=1.2pt](2,0.7)(1.8,0)
\psline[linewidth=1.2pt](0.5,1.4)(0.8,0.7)(1,0)
\psline[linewidth=1.2pt](0.8,0.7)(0.6,0)
\psline[linewidth=1.2pt](1.7,1.4)(1.4,0.7)(1.2,0)
\psline[linewidth=1.2pt](1.4,0.7)(1.6,0)
\put(0.25,1.3){$1$}
\put(1.8,1.3){$2$}
\put(-0.07,0.6){$3$}
\put(0.53,0.6){$4$}
\put(1.5,0.6){$5$}
\put(2.07,0.6){$6$}
\end{pspicture}
}}}
\def\PICFIFTEEN{{\hbox{
\begin{pspicture}(-.5,0)(14,8)
\psline[linewidth=1.2pt](2.5,3)(2,2)(3,2)(2.5,3)(4.5,3)(4,2)(5,2)(4.5,3)
\psline[linewidth=1.2pt](6.5,3)(6,2)(7,2)(6.5,3)(7.7,3)
\psline[linewidth=1.2pt](13.5,3)(14,2)(13,2)(13.5,3)(11.5,3)(12,2)(11,2)(11.5,3)
%\pscircle[linewidth=2pt](8,2.7){0.07}
%\pscircle[linewidth=2pt](9.25,2.7){0.07}
%\pscircle[linewidth=2pt](10.5,2.7){0.07}
\psline[linewidth=0.9pt](3,2)(3,1.5)
\psline[linewidth=0.9pt](4,2)(4,1)
\psline[linewidth=0.9pt](6,2)(6,1)
\psline[linewidth=0.9pt](7,2)(7,1.5)
\psline[linewidth=0.9pt](11,2)(11,1.5)
\psline[linewidth=0.9pt](13,2)(13,1)
\pscircle[linewidth=1.5pt](4,1.5){0.06}
\pscircle[linewidth=1.5pt](6,1.5){0.06}
\pscircle[linewidth=1.5pt](13,1.5){0.06}
\put(0,0.7){$\!\!w(x,y,z)=$}
\put(1.9,0.7){$x$}
\put(2.9,0.7){$y$}
\put(3.9,0.7){$z$}
\put(4.9,0.7){$x$}
\put(5.9,0.7){$z$}
\put(6.9,0.7){$y$}
\put(13.9,0.7){$x$}
\put(12.9,0.7){$z$}
\put(11.9,0.7){$x$}
\put(10.9,0.7){$y$}
\pscircle[linewidth=1.5pt](8.3,0.7){0.05}
\pscircle[linewidth=1.5pt](9.25,0.7){0.05}
\pscircle[linewidth=1.5pt](10.2,0.7){0.05}
\psline[linewidth=1.2pt](4,6)(3,4.5)(5,4.5)(4,6)(6.7,6)(5.7,4.5)(7.7,4.5)(6.7,6)(5.35,8)(4,6)
\psline[linewidth=1.2pt](9.3,6)(8.3,4.5)(10.3,4.5)(9.3,6)(12,6)(11,4.5)(13,4.5)(12,6)(10.65,8)(9.3,6)
\psline[linewidth=1.2pt](5.35,8)(10.65,8)
\psline[linewidth=1pt,linestyle=dotted](2.5,3)(0.5,3)
\psline[linewidth=1pt,linestyle=dotted](3,4.5)(0.5,4.5)
\psline[linewidth=1pt,linestyle=dotted](4,6)(0.5,6)
\psline[linewidth=1pt,linestyle=dotted](5.35,8)(0.5,8)
\put(0.2,2.9){$n$}
\put(0.2,4.4){$2$}
\put(0.2,5.9){$1$}
\put(0.2,7.9){$0$}
\pscircle[linewidth=1.5pt](5,3.8){0.07}
\pscircle[linewidth=1.5pt](8,3.8){0.07}
\pscircle[linewidth=1.5pt](11,3.8){0.07}
\end{pspicture}
}}}
\title{Homologies of monomial operads and algebras.
}
\author{Natalia Iyudu, Ioannis Vlassopoulos }
\date{}
\begin{document}

\maketitle

\bigskip
\centerline{\it Dedicated to the memory of Victor Nikolaevich Latyshev}
\bigskip

\begin{abstract} We consider the bar complex of a monomial non-unital associative algebra $A=k \lll X \rrr / (w_1,...,w_t)$. It splits as a  direct sum of complexes $B_w$, defined for any fixed monomial $w=x_1...x_n \in A$.
 We give a simple argument, showing that the homology of this subcomplex is at most one-dimensional, and describe the place where the nontrivial homology appears. It has a  very simple expression in terms of the length of the generalized Dyck path associated to a given monomial in $w \in A$.

  The operadic analogue  of the question about  dichotomy in homology is considered. It is shown that dichotomy holds in case when monomial tree-relations form an order. Examples are given showing that in general dichotomy and homological purity does not hold. For quadratic operads, the combinatorial tool for calculating homology in terms of relation graphs is developed. Example of using these methods to compute homology in truncated binary operads is given.
\end{abstract}

\small \noindent{\bf MSC:} \ \ 16S38, 16S50, 16S85, 16A22, 16S37, 14A22

\noindent{\bf Keywords:} \ \ Monomial non-unital algebras, bar complex, Dyck path, Euler characteristic, monomial operad, quadratic operad, graph of relations, binary truncated operad, homological purity, 0-1 dichotomy in homology.

\normalsize

\section{Introduction}

Let $A$  be a monomial algebra without a unit. We fix a presentation of $A$ by generators and monomial relations $w_1,...,w_t: \,\,\, A=k \lll X \rrr / (w_1,...,w_t)$, and suppose that monomials $w_1,...,w_t$ have the property that no monomial is a submonomial of another one. Moreover, suppose naturally  that this set of monomials does not contain any generator from $X$. An introduction to monomial algebras and their structural and homological properties can be found in \cite{L}. Note that the algebras we consider here are non-unital, therefore,  the bar complex can have a nontrivial homology.

 Consider the following subcomplex $B_w$ of the bar complex, associated to a fixed monomial $w=x_1...x_n$.
 $$0 \longrightarrow B_n=\{  x_1\o...\o x_n \}  \mathop{\longrightarrow}\limits^D B_{n-1} =\{  x_1\o...\o x_i x_{i+1} \o ... \o x_n \} \mathop{\longrightarrow}\limits^D ...   \mathop{\longrightarrow}\limits^D B_1 =\{  x_1... x_n \}   \longrightarrow 0
 $$

 Clearly, $B_i \subset A^{\o i}$ and the direct sum of these subcomplexes  of the bar complex, for all words $w$ in the monomial algebra $A$  gives the bar complex. Consider
 $B=\oplus B_i$ as a graded linear space. We have a linear map $D: B \longrightarrow B$, satisfying $ D^2=0. $ This map sends any monomial $ u_1\o...\o u_k$ from $B_k$ to the linear combination of monomials with one deleted tensor:
$$D( u_1\o...\o u_k)= \sum_i(-1)^{\sigma} u_1\o...\o u_i u_{i+1} \o ... \o u_k$$
 First, in section~\ref{sec2} we will give a simple proof for the statement
 that homologies of the defined above subcomplex of bar complex are at most one-dimensional.
 In section~\ref{sec5} this proof is generalised to a certain class of operads, and it is shown that the 0-1 dichotomy for homology does not hold in general for monomial operads.

 The next step is to find the place in the complex, where nonzero homology appears,
 and express the result in combinatorial terms related  to the monomial algebra data. Namely we use just the {\it length} of a generalised {\it Dyck path} defined by a word in a  monomial algebra. It is done in section~3.

 Then we consider analogous questions for operads.
 First, we show that for operads given by order, the  0-1 dichotomy in homology still holds.
 However, we present examples, showing that in the operadic case not only does the 0-1 dichotomy in homology  not hold, but also that a conjecture \cite{D} that this homology is pure will fail.

 In  section~\ref{sec7} we clarify and simplify the picture of operadic homology, and to give a convenient tool for the calculation of operadic homology for monomial operads, we first explain correspondence between the latter and the homologies of certain complexes for monomial factors of Grassmann algebras.

Then, in section~\ref{sec9} we consider the case of quadratic operads. For them we define a graph encoding the relations. In terms of these graphs we describe certain transformations (Theorem~\ref{9.3}) that allow us
  to 'split' the complex into 'smaller' ones, and by these means calculate homology.
 Using this tool we can, for example, recursively  calculate homologies for the operad of truncated binary trees.
 The recursion involves parametrisation by noncommutative monomials, which is (mild for this example) manifestation of a general parametrisation by trees.

 The version of this text is publushed in the  MPIM preprint series \cite{Prepr}.

\section{Homologies of the subcomplex of the bar complex, defined by a monomial, are at most one-dimensional}\label{sec2}

The main goal of this section is to give a simple direct proof of the following statement.

 \begin{theorem}\label{th1.1}
 The full homology of the defined above subcomplex $B_w$ of the bar complex of the non-unital monomial algebra $A$ is at most one-dimensional:
 $$ \dim H_{\bullet}(B_w)={\dim} {\ker} D / {\im} D \in \{0,1\}$$
 \end{theorem}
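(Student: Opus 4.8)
The plan is to recast $B_w$ as the augmented simplicial cochain complex of an explicit complex, and then to prove that this complex is always contractible or homotopy equivalent to a single sphere. Write $w=x_1\cdots x_n$ and label the $n-1$ gaps between consecutive letters by $1,\dots,n-1$. A basis element $u_1\otimes\cdots\otimes u_i$ of $B_i$ is the same datum as the set $S\subseteq\{1,\dots,n-1\}$ of gaps at which $w$ is cut, subject to the requirement that every block $u_j$ be nonzero in $A$; equivalently, recording each occurrence of a relation $w_\ell$ inside $w$ as the interval $I$ of gaps it straddles, $S$ must meet every such $I$. Passing to the complementary set $T=\{1,\dots,n-1\}\setminus S$ of uncut gaps, the admissible $T$ are exactly those containing no full interval $I$, and these form a simplicial complex $\Delta$. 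Since $D$ merges two adjacent blocks, i.e. deletes a cut and so enlarges $T$, it is precisely the simplicial coboundary of $\Delta$. Hence $\dim H_\bullet(B_w)=\dim\widetilde H^\bullet(\Delta)$, and the theorem becomes the assertion that the total reduced cohomology of $\Delta$ is at most one-dimensional.

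First I would isolate the acyclic case. Call a gap \emph{free} if no relation occurrence straddles it, i.e. $w$ factors there as a concatenation of two nonzero monomials. A free gap $g$ is a cone vertex of $\Delta$, since adjoining $g$ to any admissible $T$ keeps it admissible; thus $\Delta$ is a cone, hence contractible, and $H_\bullet(B_w)=0$. Concretely this cone is realised on $B_w$ by the homotopy $h$ that splits the first letter off the first block, $x_1u_1'\otimes u_2\otimes\cdots\mapsto x_1\otimes u_1'\otimes u_2\otimes\cdots$. When gap $1$ is free no rightward merge of $x_1$ can create a relation, so $h$ satisfies $Dh+hD=\mathrm{id}-p$ with $p$ the projection onto chains already beginning with $x_1$, identifying $H_\bullet(B_w)$ with that of $B_{x_2\cdots x_n}$. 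This disposes of every $w$ possessing a free gap, that is, every $w$ in which some $x_ix_{i+1}$ fails to sit inside a relation.

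The remaining, and hardest, case is when every gap is covered, so that relation occurrences overlap and no cone vertex is visible. Here I would induct on $n$ by stripping the extreme gap $N=n-1$, which lies in a unique covering interval $I_m=[a_m,N]$; uniqueness is forced by the hypothesis that no relation is a submonomial of another (two occurrences straddling gap $N$ would both be suffixes of $w$, hence one a submonomial of the other). Deleting $N$ and taking its link both yield interval-avoiding complexes on fewer gaps, to which the inductive hypothesis applies, and the Mayer--Vietoris sequence for $\Delta=\mathrm{del}(N)\cup\overline{\mathrm{star}}(N)$ then controls $\widetilde H^\bullet(\Delta)$; an orthogonal simplification is the join (Künneth) decomposition of $\Delta$ across any boundary not straddled by a relation, which reduces matters to overlap-connected blocks. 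The crux, and the main obstacle, is to show that each such block is \emph{either} contractible \emph{or} a single sphere, never carrying homology in two different degrees. The Euler characteristic alone cannot see this: an irreducible, fully covered block can be contractible, as already happens for three overlapping length-two intervals on four gaps (whose interval-avoiding complex is a tree), so one must genuinely track the homotopy type through the recursion rather than merely count cells, and it is precisely the preservation of the invariant ``contractible or one sphere'' under the deletion--link step that needs to be established with care.
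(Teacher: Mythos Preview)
Your simplicial reformulation is correct and illuminating: identifying $B_w$ with the augmented cochain complex of the interval-avoiding complex $\Delta$ is exactly right, and the cone argument for a free gap is clean. But the proposal stops precisely where the content is. The Mayer--Vietoris route genuinely stalls: in the fully covered case neither $\mathrm{del}(N)$ nor $\mathrm{link}(N)$ need be contractible. Already for $w=x_1x_2x_3x_4x_5$ with relations $x_1x_2x_3$, $x_2x_3x_4$, $x_3x_4x_5$ (intervals $\{1,2\},\{2,3\},\{3,4\}$, $N=4$) both $\mathrm{del}(4)$ and $\mathrm{link}(4)$ are copies of $S^0$, while $\Delta$ itself is contractible. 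So knowing that each piece has total reduced cohomology at most one says nothing about $\widetilde H^\bullet(\Delta)$ until you control the restriction map $\widetilde H^\bullet(\mathrm{del})\to\widetilde H^\bullet(\mathrm{link})$ in the long exact sequence; that control is the whole theorem, and your outline does not supply it.

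The paper sidesteps Mayer--Vietoris entirely. Working at gap $1$ rather than gap $N$, it splits $B=E\oplus F$ according to whether gap $1$ is cut, and looks at the merging map $J\colon E\to F$, $x_1\otimes u\mapsto x_1u$. The decisive observation---using that gap $1$ lies in the unique interval $I_1=[1,s{-}1]$---is that $\ker J=x_1\otimes L$ where $L$ is \emph{itself} the bar complex $B_{w'}$ of the shorter word $w'=(x_2\cdots x_s)\,x_{s+1}\cdots x_n$, with $x_2\cdots x_s$ fused into a single letter. Writing a general element as $\epsilon=x_1u+x_1\otimes v$ and solving $D\epsilon=0$ directly parametrises the kernel and yields $\dim\ker D=\dim F+\dim(L\cap\ker d)$, whence
\[
\dim H_\bullet(B_w)\;=\;2\dim\ker D-\dim B\;=\;2\dim(L\cap\ker d)-\dim L\;=\;\dim H_\bullet(B_{w'}).
\]
This is an \emph{equality} with a single smaller bar complex, so the induction closes at once---no long exact sequence, no connecting map to analyse. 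In your picture the step from $w$ to $w'$ collapses the entire block of gaps $\{1,\dots,s{-}1\}$ singled out by the leftmost interval, which is a sharper move than deletion/link at one vertex. If you want to rescue your approach, the missing idea is exactly this: recognise $\ker J$ as a bar complex for a shorter word and reduce to it directly, rather than trying to reassemble $\Delta$ from deletion and link.
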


 \begin{proof}

 (induction by n)

 Basis: if the word $w$ is empty, the complex is zero and $H_w=0$. If $w=x$ is a letter, the complex is $0 \longrightarrow k \longrightarrow 0$, and $H_w=k$ is one-dimensional. If $w=xy$, then in case $xy \neq 0$ in $A$, the complex is $0 \longrightarrow (x \o y) k \longrightarrow xy k \longrightarrow 0$.
 It is exact, $H_w=0$. In case $xy=0$, the complex is
 $0 \longrightarrow (x\o y) k \longrightarrow 0$, and $H_w=k$.

 We need to prove that
 ${\dim} \, {\ker} D / {\im} D \in \{0,1\}.$

 Since
 $ {\dim}  {\im} D   = {\dim} B -     {\dim} {\ker} D,
 $
 it is the same as
 $$ {\dim} {\ker} D -  {\dim} {\im} D = $$
 $$ 2 {\dim} {\ker} D -  {\dim} B \in \{0,1\}
 $$

 Thus, $\dim H_{\bullet}(B_w,D)=2 {\dim} {\ker} D -  {\dim} B $.

 Let us split the vector space $B$ as $B=E \oplus F$, where $E=\{x_1\o...\}_k$ and
 $F=\{x_1...\}_k$ are subspaces spanned by those monomials where the first letter is followed directly by a tensor symbol, and by those where the tensor either appears later or  not  at all.
 Consider the linear map
 $ J: E \longrightarrow F$, defined by $x_1\o u \mapsto x_1u$.

 The kernel of this map is a linear span of monomials of the type $x_1\o x_2...x_s\star ...$,
 where $
 x_1x_2...x_s $ is the shortest beginning subword of $w$, which is zero in $A$. That is
 $x_1x_2...x_s=0$, but $x_1x_2...x_{s-1} \neq 0$, and $\star$ throughout this paper means that in this place and further in the word operations are either $\o$ or just multiplication $\cdot$ in the monomial algebra $A$:
 $${\ker} J=\{x_1\o x_2...x_s\star... \}_k$$
 Denote by $L=\{x_2...x_s\star... \}_k$, where $x_2...x_s$ as above, so ${\ker} J=x_1 \o L$.

 First we calculate the images of differential $D$ on elements from $E$ and $F$, namely:
 $$D(x_1\o u)= x_1u- x_1 \o du= J(x_1 \o u) - x_1\o du$$
 Here $d$ is the same differential as $D$, but acts on the word $x_2...x_n$.
$$D(x_1 u)= x_1 du =J (x_1 \o du)$$

Now calculate  the ${\ker} D$. Let $\ep \in B$, such that $D(\ep)= 0$.
 We have $B=E \oplus F$, so $\ep $ is uniquely presented as:
$$ \ep = x_1 u + x_1 \o v$$
 Applying $D$ to elements from $E$ and $F$ as above, we have
 $$0= D(\ep)= J(x_1\o du)+J(x_1 \o v) - x_1 \o dv$$
 Note that here $J(x_1\o du)+J(x_1 \o v) \in F$ and $x_1 \o dv \in E$.
 Hence this is equivalent to $x_1 \o dv = 0 $ and $J(x_1 \o (du + v))=0$,
 which means $dv=0$ and $ x_1 \o (du + v) \in {\ker} J=x_1 \o L$.
 Again, the latter two conditions are equivalent to $dv=0$ and $du+v \in L$, which in turn can be reformulated as $dv=0$ and $v\in -du+ L$. The latter two mean $v\in -du+ (L \cap {\ker} d)$.

 Now
 $$ {\rm dim} \,{\ker} D = \d F + \d(L \cap \ker d)= \d E-\d L+\d(L \cap \ker d).$$
 Thus
 $$ {\rm dim} H_{\bullet}(B_w,D)=2\d \ker D -\d B= $$
 $$2 \d E- 2\d L + 2\d (L \cap \ker d)- \d E- \d E +\d L= $$
 $$2\d (L \cap \ker d) - \d L = {\rm dim} H_{\bullet}(L_{w'},d)$$
 Here $d$ is a typical bar differential on the subword of $w$ - starting from $x_2$, $w'=ux_{s+1}...x_n$ - and $L_{w'}$ is a bar subcomplex, defined by the word $w'$.

 We got that $H_{\bullet}(B_w,D) =  H_{\bullet}(L_{w'},d)$, the latter by the inductive assumption is in $\{0,1\}$, thus so is $H_{\bullet}(B_w,D)$.
\end{proof}

Let us formulate here a corollary, which will be used in section \ref{sec3}.

\begin{corollary}
For any word $w$ of noncommutative non-unital algebra $A$, the defined above subcomplex $B_w$ of the bar complex is exact if and only if the dimension of $B_w$ is even.
\end{corollary}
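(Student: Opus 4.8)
The plan is to deduce the corollary immediately from the formula established in the proof of Theorem~\ref{th1.1}, combined with the $\{0,1\}$ dichotomy of that same theorem. Recall that inside that proof one shows, for the total underlying space $B=\bigoplus_i B_i$ of the complex $B_w$, the Euler-type identity
$$\dim H_{\bullet}(B_w,D)=2\,\dim\ker D-\dim B,$$
where $\dim B=\sum_i\dim B_i$ is exactly the dimension of $B_w$ named in the corollary. My first step is therefore to record this identity verbatim, noting only that its right-hand side differs from $\dim B$ by the even integer $2\,\dim\ker D$.

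The second step is a parity observation read off from the identity, namely
$$\dim H_{\bullet}(B_w)\equiv \dim B \pmod 2,$$
so that the homology dimension and the total dimension of the complex always have the same parity.

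The third and final step is to feed in Theorem~\ref{th1.1}, which guarantees $\dim H_{\bullet}(B_w)\in\{0,1\}$. The congruence then pins down the homology exactly: when $\dim B$ is even, $\dim H_{\bullet}(B_w)$ is the unique even element of $\{0,1\}$, namely $0$, so $B_w$ is exact; when $\dim B$ is odd, $\dim H_{\bullet}(B_w)$ is the unique odd element of $\{0,1\}$, namely $1$, so $B_w$ is not exact. Reading this equivalence in both directions yields precisely the claim that $B_w$ is exact if and only if $\dim B_w$ is even.

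I do not expect a genuine obstacle, since both ingredients---the Euler identity and the dichotomy---are already in hand. The only point requiring a line of care is the bookkeeping that the symbol $B$ in the theorem's proof denotes the full underlying space of the single complex $B_w$ (not the whole bar complex), so that $\dim B$ in the formula and $\dim B_w$ in the corollary refer to the same number.
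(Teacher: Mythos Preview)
Your proposal is correct and follows essentially the same approach as the paper: establish the parity congruence $\dim H_{\bullet}(B_w)\equiv \dim B\pmod 2$ and then invoke the $\{0,1\}$ dichotomy of Theorem~\ref{th1.1}. The only cosmetic difference is that you cite the identity $\dim H_{\bullet}=2\dim\ker D-\dim B$ from inside the proof of Theorem~\ref{th1.1}, whereas the paper rewrites the same parity fact as $\dim H_{\bullet}(B)-\dim B=-2\dim\im D$; these are trivially equivalent.
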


\begin{proof}
For any complex $B$ it is true, that its dimension is even or odd together with the dimension of its homology. Indeed,
$$H_{\bullet}(B)=\ker D/ \im D, \,\,\, B=\ker D + \im D,$$
so,
$$\dim H_{\bullet}(B)=\dim \ker D - \dim \im D, \,\,\, \dim B=\dim \ker D + \dim \im D,$$
thus, $\dim H_{\bullet}(B)-\dim B = -2 \dim \im D$ is even.

Taking into account Theorem~\ref{th1.1}, we see that since $ \dim H_{\bullet}(B_w)$ can be either $0$ or $1$, the subcomplex $B_w$ is exact if and only if the $\dim B $
is even.
\end{proof}

{\bf Remark} It worth comparing our arguments with those of discrete Morse theory \cite{M}, where the combinatorial condition on  the maps between the linear basis of the complex is formulated, which allows to find a subcomplex with the same homologies. It is easy to see that for the obvious linear basis in the case of our subcomplex, the condition is not satisfied, so the Morse theory can not be applied, at least not in an obvious way.

 \section{The exact position of nontrivial homology in the bar subcomplex of a monomial algebra}\label{sec3}

 As above let $B_w$ be the subcomplex of the bar complex of monomial non-unital algebra $A$,
 associated with the monomial $w=x_1...x_n \in \lll X \rrr.$

In this section for any word $w =x_1...x_n$ and a fixed set of generating monomials, also known as relations, $w_1,...,w_t$ we will define a {\it generalised Dyck path } as follows. Take the first (minimal) beginning subword
 $x_1...x_{d_1}$ of $x_1...x_n$ that is zero in $A$, i.e.  contains as a submonomial one of the monomials
 $w_1,...,w_t$. In other words, $x_1...x_{d_1}$ contains as a subword (as a beginning) one of
 $w_1,...,w_t$, but $x_1...x_{d}$ for any $d < d_1$ does not.

  Next, take the first beginning subword $x_2...x_{d_2}$ of $x_2...x_n$, that is zero in $A$, then the first beginning subword $x_{{d_1}+1}...x_{d_3}$ of $x_{{d_1}+1}...x_n$, that is zero in $A$, then the first beginning subword $x_{{d_2}+1}...x_{d_4}$ of $x_{{d_2}+1}...x_n$, that is zero in $A$,
 etc. This produces a sequence of numbers $d_1\leq d_2 \leq...\leq d_p$, called a {\it generalised Dyck path}.

 The notion of a conventional Dyck path can be obtained from the above, if we start searches of zero subwords not from positions $1,2,d_1 + 1, d_2 +1,...$, but from all subsequent positions $1,2,3,4...$ in the word.  In this latter version the Dyck path is a well-known  and remarkable combinatorial object \cite{C}. The number of Dyck paths of order $p$ is a Catalan number:
 $$C_n=\frac{1}{n+1} \binom{2n}{n}$$

 As we only  deal  with {\it generalised Dyck paths} here  we will sometimes call them just {\it Dyck path}, slightly abusing the terminology.

  We say, that {\it the Dyck path} $d_1\leq d_2 \leq...\leq d_p$ has   {\it length} $r$
  if there are $r$ different numbers in this sequence.

 \begin{theorem}\label{th2.1}
 Let $B_w$:
$$ 0 \mathop{\longrightarrow}\limits^{D_{k+1}} B_k \mathop{\longrightarrow}\limits^{D_k} B_{k-1} \mathop{\longrightarrow}\limits^{D_{k-1}} ... \mathop{\longrightarrow}\limits^{D_2} B_1 \mathop{\longrightarrow}\limits^{D_1} 0$$

be a non-exact complex associated with the word $w =x_1...x_n$ in a monomial algebra $A$, and $r$ be the length of the corresponding generalised Dyck path. Then the complex $B$ has its nonzero homology on the place $n-r$.

 More precisely, if we denote

$H_1= {\ker} D_k / {\im} D_{k+1}$, $H_2={\ker} D_{k-1} / {\im} D_{k}$,..., $H_i = {\ker} D_{k-i+1}   / {\im} D_{k-i+2}, ...,$ $ H_k= {\ker} D_1 / {\im} D_{2},$
then

 $$H_{n-r}=1, H_i=0  \quad (i=1,..,k, i \neq n-r).$$
 \end{theorem}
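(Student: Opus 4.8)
The plan is to prove the statement by induction on the length $n$ of $w$, by promoting the homology isomorphism $H_{\bullet}(B_w,D)\cong H_{\bullet}(L_{w'},d)$ established in the proof of Theorem~\ref{th1.1} to a statement that keeps track of the homological degree, and by simultaneously controlling how the generalised Dyck path changes under the passage from $w$ to $w'=u\,x_{d_1+1}\dots x_n$, where $u=x_2\dots x_{d_1}$ is the frozen block and $d_1$ is the first entry of the Dyck path. Throughout I use that $B_n=\{x_1\otimes\dots\otimes x_n\}$ is always one-dimensional, so the top term is $B_n$ and $k=n$; since by Theorem~\ref{th1.1} the total homology is at most one-dimensional, it suffices to locate the single term that carries it.

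First I would record the degree shift hidden in the reduction. The subspace $L$ sits inside $E$ as $x_1\otimes L$, and a monomial of $L_{w'}$ with $m$ tensor factors (its first factor being the frozen block $u$) is sent by $\ell\mapsto x_1\otimes\ell$ to a monomial of $B_w$ with $m+1$ tensor factors, i.e. into the term $B_{m+1}$. Hence the identification of kernels and images carried out in the proof of Theorem~\ref{th1.1} respects the internal grading up to a shift by one: the homology of $B_w$ at the term $B_{m+1}$ equals the homology of $L_{w'}$ at its term with $m$ factors. In the top-counting index of the present statement, a term with $m$ factors sits in index $n-m$ for $B_w$ and in index $(n-d_1+1)-m+1$ for $L_{w'}$; thus if $L_{w'}$ carries its homology in index $(n-d_1+1)-r'$ then $B_w$ carries its homology in index $n-(r'+1)$.

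Next I would analyse the Dyck path. Writing out the construction for $w'$ and using that its first letter is the frozen block $u=x_2\dots x_{d_1}$, one checks that each search in $w'$ coincides with the corresponding search in $w$ begun one index later: the first zero beginning of $w'$ is $u\,x_{d_1+1}\dots x_{d_2}$, realising $d_2$; the search from the second letter of $w'$ starts at original position $d_1+1$ and realises $d_3$; the search from position $d_1'+1$ starts at original position $d_2+1$ and realises $d_4$; and inductively the $i$-th entry of the Dyck path of $w'$ equals $d_{i+1}$. Thus the Dyck path of $w'$ is exactly $d_1\le d_2\le\dots\le d_p$ with its first entry $d_1$ deleted. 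In the non-exact case the frozen block $u$ is nonzero (otherwise $\ker J=0$, hence $L=0$ and $H_{\bullet}(B_w)=0$, contradicting non-exactness), so $x_2\dots x_{d_1}$ contains no relation and therefore $d_1<d_2$ strictly; consequently deleting $d_1$ removes exactly one of the distinct values and $r'=r-1$. Substituting into the degree count gives homology of $B_w$ in index $n-(r'+1)=n-r$, as claimed. The base case is a single nonzero letter, where $r=0$ and the homology $k$ sits in the bottom term $B_1$, i.e. in index $n-r=1$.

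The main obstacle will be the careful verification of the grading-refined reduction together with the claim that non-exactness propagates through the entire recursion. One must confirm that the isomorphism $H_{\bullet}(B_w,D)\cong H_{\bullet}(L_{w'},d)$ is genuinely induced by $\ell\mapsto x_1\otimes\ell$, so that it shifts the homological degree by exactly one rather than merely matching ungraded dimensions, and that at every reduction step the frozen block $u$ remains nonzero. The latter is what guarantees $d_1<d_2$ at each stage, so that each step deletes precisely one distinct Dyck value and the recursion terminates at a single nonzero block. Once these two bookkeeping facts are secured, the position $n-r$ follows by stripping one block at a time and raising the homological degree by one at each strip.
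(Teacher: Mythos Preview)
Your approach is essentially the paper's: both iterate the reduction $B_w\rightsquigarrow L_{w'}$ from Theorem~\ref{th1.1} and track how the degree and the Dyck path evolve; the paper unrolls this into an explicit tower $B,L^{(1)},\dots,L^{(r)}$ together with a commutative-diagram lemma, whereas you package the same mechanism as a clean one-step induction. Your identification of the two bookkeeping points (the degree shift by one and the strict inequality $d_1<d_2$ under non-exactness) matches exactly what the paper verifies through its diagram lemma and through Lemma~\ref{l2.4}, so once you pin those down the argument is complete; note only that the proof of Theorem~\ref{th1.1} gives a dimension equality rather than a graded map, so the shift-by-one really does require the extra check you flag (most transparently, $B_w$ is the cone of the surjection $J:E\to F$, whence $x_1\otimes L=\ker J\hookrightarrow B_w$ is a quasi-isomorphism).
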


\begin{lemma}\label{l2.4}
 If there exists a letter $x_i$ in the monomial $w=x_1...x_n$ which is not present in the defining relations $w_1,...,w_t$, then the complex associated to the monomial $w=x_1...x_n$ is exact.
 \end{lemma}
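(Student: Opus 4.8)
The plan is to avoid any homological computation and instead invoke the corollary to Theorem~\ref{th1.1}, which states that $B_w$ is exact if and only if $\dim B_w$ is even. A basis of $B_w$ is given by the admissible cuts of $w=x_1\dots x_n$ into consecutive blocks $u_1\otimes\dots\otimes u_k$ with $u_1\dots u_k=w$ and each $u_j\neq 0$ in $A$, so it suffices to exhibit a fixed-point-free involution on this set of admissible cuts; such an involution pairs the basis into two-element orbits and forces $\dim B_w$ to be even. Assuming $n\geq 2$ (for $n=1$ the word is a single generator and the statement is vacuous), I would fix a single gap $g$ immediately adjacent to the distinguished position $i$: the gap between $x_i$ and $x_{i+1}$ when $i<n$, and the gap between $x_{i-1}$ and $x_i$ when $i=n$.

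The involution $\phi$ toggles the presence of a tensor symbol at this one gap $g$: if an admissible cut carries a tensor at $g$, merge the two blocks meeting there; if it carries no tensor at $g$, split the block containing $g$ at that spot. Plainly $\phi^2=\mathrm{id}$ and $\phi$ changes the number of blocks by $\pm 1$, so $\phi$ has no fixed point; the only thing needing verification is that $\phi$ sends admissible cuts to admissible cuts. Splitting is harmless, since any contiguous submonomial of a block containing no relation still contains no relation. For merging, the merged block fails to be nonzero only if it contains some relation $w_l$ as a subword; such an occurrence cannot sit inside either of the two original (nonzero) blocks, so it would have to straddle the gap $g$.

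The crux --- the only place where the hypothesis is used --- is that no relation can straddle $g$: a subword occupying positions $[a,b]$ with $a\leq i\leq b$ that straddles $g$ necessarily contains the letter $x_i$, and by assumption $x_i$ appears in none of $w_1,\dots,w_t$. Hence no $w_l$ can straddle $g$, the merge is always admissible, and $\phi$ is a genuine fixed-point-free involution on the basis of $B_w$; consequently $\dim B_w$ is even and the corollary yields exactness. I expect the main (and only mild) obstacle to be the bookkeeping that toggling at $g$ affects only the one or two blocks adjacent to $g$ and leaves every other block, hence its nonvanishing, untouched --- which is immediate from the locality of the operation.
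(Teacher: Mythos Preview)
Your proof is correct and is essentially the paper's own argument: the paper also invokes the corollary to Theorem~\ref{th1.1} and shows $\dim B_w$ is even by the very same toggle at the gap adjacent to $x_i$, phrased as a bijection $V_1\to V_2$ (between basis elements with and without a tensor after $x_i$) rather than as a fixed-point-free involution on all of $B_w$. The only cosmetic difference is that the paper first reduces to the case where all letters of $w$ are distinct, whereas you work directly with the position $i$; also, note that for $n=1$ the hypothesis is not vacuous but the conclusion actually fails, so the tacit restriction to $n\geq 2$ (which the paper also makes) is genuinely needed.
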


 \begin{proof}
 Note the following: without loss of generality we can suppose that all letters in $w=x_1...x_n$ are different.
 Indeed, the defined above complex
 only depends
 of the positions where the words $w_1,...,w_t$ appear in the word  $w$.
 Therefore, for an arbitrary word $w$ and its subwords  $w_1,...,w_t$, one can take a different word $w'$, which has  the property that all of its letters are different, and  take  subwords
 $w'_1,...,w'_t$ of  $w'$ as a relations that sit in the same positions as    $w_1,...,w_t$ in $w$. Then the corresponding complexes would coincide:

 $$B_{w,w_1,...,w_t}=B_{w',w_1',...,w_t'}.$$

 Thus, the study of a subclass of all monomial algebras, consisting of those algebras, where each generator appears in the relations at most once, is sufficient to study  the properties of the bar complex.

 Take a letter $x_i \in w$ which is not present in any relation   $w_1,...,w_t.$ Suppose it is not the last letter in $w: x_i \neq x_n$. Otherwise we can suppose it is not the first letter $x_i \neq x_1$, as it cannot appear twice in the word $w$.

 The complex $B$ is a span of tensors $B=span \{u_1 \o...\o u_N\}$, where $u_j \neq 0$ in $A$.
 So, obviously, if $x_i$ is not the last letter, the space $B$ splits into two parts: the span of those words where $x_i$ is directly followed by a tensor symbol $\o$: ($\,\,\,x_i \o...$), or the span of those, where it is not:  ($\,\,\,x_i ... \o...$). Denote them as $V_1$ and $V_2$ respectively, so $B=span \{u_1 \o...\o u_N\}=V_1 \o V_2.$

If we define a linear map $\phi:V_1\to V_2$ by $\phi(\dots x_i\otimes u\otimes\dots)=\dots x_iu\otimes\dots$, we see that $\ker \phi=0$ since $x_i$ is not contained in the relations, and that is clearly an onto map, so it is a bijection. This means that $\dim B=2\dim V_1$ is even, hence the complex is exact (corollary from the theorem\ref{th1.1}).
\end{proof}

\begin{proof}[Proof of Theorem\ref{th2.1}] Let us define a sequence of complexes
$$
B,\ L^{(1)},\ L^{(2)},\dots,L^{(r-1)},\ L^{(r)}
$$
inductively, using the definition of complex $L$ from Section\ref{sec2}. Namely, let us split $B$ as $B=E\oplus F$, where
$$
E=\{x_1\otimes\dots\}_k, \qquad F=\{x_1\dots\}_k,
$$
and consider the linear map $J:E\to F$ defined by $x_1\otimes w\mapsto x_1w$. Then
$$
\ker J=\{x_1\otimes\underbrace{x_2\dots x_s}_u\dots\}_k,
$$
where $u=x_2\dots x_s$ is the first (minimal) zero subword of $x_1\dots x_n$ starting with $x_1$. In other words, $s$ is defined as a minimal number, such that $x_1\dots x_s=0$ but $x_1\dots x_{s-1}\neq 0$.

{\bf Definition.} We denote by $L(B)$ the linear space spanned by the monomials defining the $\ker J$:
$$
L(B)=\{\underbrace{x_2\dots x_s}_u\dots\}_k,
$$
such that $\ker J=x_1\otimes L$. As a complex, $L=B_{ux_{s+1}\dots x_n}$ is a bar complex defined by the word $ux_{s+1}\dots x_n$.

We then  set $L^{(1)}=L(B)$ and continue constructing $L^{(2)}=L(L^{(1)})$ and so on. Each of the complexes $L^{(i+1)}$ is obtained as a quotient of $L^{(i)}$

\begin{lemma}\label{com}
The following diagram of complexes is commutative:

$$
\begin{array}{ccccccccccc}
0&\longrightarrow&B_k&\mathop{\longrightarrow}\limits^{D}&B_{k-1}&\longrightarrow&
\dots&\longrightarrow&B_1&\longrightarrow&0\\
&&\downarrow&&\downarrow&&&&\downarrow&&\\
0&\longrightarrow&L_k^{(1)}&\mathop{\longrightarrow}\limits^{d^{(1)}}&L_{k-1}^{(1)}
&\longrightarrow&\dots&\longrightarrow&L_1^{(1)}&\longrightarrow&0\\
&&\downarrow&&\downarrow&&&&\downarrow&&\\
&&\vdots&&\vdots&&&&\vdots&&\\
&&\downarrow&&\downarrow&&&&\downarrow&&\\
0&\longrightarrow&L_k^{(r-1)}&\mathop{\longrightarrow}\limits^{d^{(r-1)}}&L_{k-1}^{(r-1)}&
\longrightarrow&\dots&\longrightarrow&L_1^{(r-1)}&\longrightarrow&0\\
&&\downarrow&&\downarrow&&&&\downarrow&&\\
0&\longrightarrow&L_k^{(r)}&\mathop{\longrightarrow}\limits^{d^{(r)}}&L_{k-1}^{(r)}&
\longrightarrow&\dots&\longrightarrow&L_1^{(r)}&\longrightarrow&0
\end{array}
$$
\end{lemma}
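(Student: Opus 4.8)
The plan is to establish commutativity one rung at a time. Every horizontal arrow in the diagram is a bar differential and every vertical arrow is one of the canonical reduction maps produced by the construction of Section~\ref{sec2}, so the whole ladder commutes if and only if each vertical map is a chain map, i.e. intertwines $d^{(i)}$ and $d^{(i+1)}$. Since the tower is built by iterating a single operation, $L^{(i+1)}=L(L^{(i)})$, all rungs are instances of one statement; it therefore suffices to prove that for an arbitrary bar subcomplex $B=B_w$ the canonical map relating $B$ to $L(B)$ commutes with the differentials $D$ and $d=d^{(1)}$. I will carry out this single-step verification and then transport it verbatim to every level.

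First I would recall the data from the proof of Theorem~\ref{th1.1}: the splitting $B=E\oplus F$, the map $J\colon E\to F$, $x_1\otimes t\mapsto x_1 t$, and the identification $\ker J=x_1\otimes L$ with $L=L(B)$ realized as the bar complex $B_{w'}$ of $w'=u\,x_{s+1}\dots x_n$, where $u=x_2\dots x_s$ is treated as a single leading super-letter. The two formulas computed there, $D(x_1\otimes t)=J(x_1\otimes t)-x_1\otimes dt$ and $D(x_1 t)=J(x_1\otimes dt)$, are the entire engine. Under the reduction, $L^{(i+1)}$ is identified via $x_1\otimes L^{(i+1)}=\ker J^{(i)}$ with the homology-carrying piece of $L^{(i)}$, and commutativity of each square becomes the assertion that the differential of $L^{(i)}$ preserves this piece and restricts to the differential of $L^{(i+1)}$. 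Restricting the first formula to $\ker J$, where the $J$-term vanishes, gives $D(x_1\otimes t)=-x_1\otimes dt$, and I would check that $x_1\otimes dt$ again lies in $\ker J$: merging two adjacent tail blocks only lengthens the block that already begins with the relation $x_1\dots x_s$, so it stays zero in $A$. Hence $\ker J$ is closed under $D$ and $D$ restricts to it.

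The heart of the argument is to identify this restricted differential with $d^{(1)}$. Under $x_1\otimes t\leftrightarrow t$, with the leading block read as the atom $u$, the surviving terms of $D|_{\ker J}$ are exactly the tail merges making up $dt$, and these are precisely the merges of adjacent blocks of $w'=u\,x_{s+1}\dots x_n$. The one merge that is absent, namely gluing $x_1$ to the next block, is exactly the term that $J$ annihilates on $\ker J$, and it corresponds to no admissible merge in $w'$ because $u$ is already the leading atom. Thus $D|_{\ker J}$ matches $d^{(1)}$ up to the global sign in the formula, which is irrelevant for commutativity. I would run this comparison on basis monomials, handling the two cases (a generator of $E$ and a generator of $F$) separately, and track the uniform degree shift by one induced by deleting the leading $x_1$ and atomizing $u$, so that all vertical maps carry a single well-defined homological degree across the ladder.

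Finally I would assemble the rungs: since $L^{(i+1)}$ is obtained from $L^{(i)}$ by literally the same construction as $L(B)$ from $B$, with $w$ replaced by the word defining $L^{(i)}$, the single-step verification applies without change at every level, and composing chain maps yields commutativity of the full diagram. The main obstacle I anticipate is precisely the matching of the previous paragraph: one must verify termwise, with correct signs, that the bar differential of the reduced word $w'$ (including the first block, the one containing $u$) corresponds to $D$ restricted to $\ker J$, and in particular that the single missing merge is fully accounted for by the vanishing of $J$ on $\ker J$ together with the atomicity of $u$. A secondary point to secure is that $u$ is a legitimate nonzero super-letter and that the positions of the defining relations inside $w'$ are exactly those inherited from $w$, so that $L(L^{(i)})$ is well defined at every stage; this is routine once we normalise, as in Lemma~\ref{l2.4}, so that all letters of $w$ are distinct.
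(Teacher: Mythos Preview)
Your approach is essentially the paper's: reduce to a single rung, identify the vertical map via $\ker J = x_1\otimes L$, and verify that it intertwines $D$ and $d$ by applying the formula $D(x_1\otimes t)=x_1 t - x_1\otimes dt$ to basis monomials. The paper carries out literally the same direct computation, expanding $M_k\bigl(D_k(x_1\otimes x_2\dots x_{s_1}w_1\otimes w_2\otimes\cdots)\bigr)$ and $d_k\bigl(M_k(\cdots)\bigr)$ term by term to obtain $M(D(v))=-d(M(v))$ on the ``nontrivial'' monomials $v\in\ker J$, and then remarks that the same argument works at every row of the diagram.
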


\begin{proof}

Consider the square of the diagram:

$$
\begin{array}{rcccl}
&B_k&\mathop{\longrightarrow}\limits^{D_k}&B_{k-1}&\\
\llap{\hbox{$M_k$}}&\downarrow&&\downarrow&M_{k-1}\\
&L_k&\mathop{\longrightarrow}\limits^{d^{(1)}_k}&L_{k-1}
\end{array}
$$

Here, maps $M_k: B_k \longrightarrow L_k$ are defined by $x_1 \o x_2...x_{s_1}... \mapsto u_1...$, where $u_1=x_2...x_{s_1}$, while all other monomials are mapped to zero.

Then
$$ D_k(x_1 \o x_2...x_{s_1}w_1 \o w_2 \o w_3...)=$$
 $$x_1 x_2...x_{s_1}w_1 \o w_2... - x_1 \o x_2...x_{s_1}w_1  w_2 \o w_3... + ...$$

 Applying $M$ to this polynomial, we get
 $$M_k(D_k))=-u_1 w_1 w_2 \o w_3...+ u_1 w_1 \o w_2 w_3... - ...$$

 Now we do it the other way around:
 $$M_k(x_1 \o x_2...x_{s_1}w_1 \o w_2 \o w_3...)=u_1w_1 \o w_2 \o w_3...$$
 $$d_k(M_k(x_1 \o x_2...x_{s_1}w_1 \o w_2 \o w_3...))=d_k(u_1w_1 \o w_2 \o w_3...)=$$ $$u_1w_1 w_2 \o w_3... - u_1w_1 \o w_2  w_3... + ...$$

 So, we see that $M(D(v))=-d(M(v))$ for any   'nontrivial'  monomial
 $v=x_1 \o x_2...x_{s_1}...$.

 Same type of argument works for any other row of the diagram as long as  formulas for the maps are substituted accordingly.
\end{proof}

The $n^{\rm th}$ row of the diagram is defined by the $n^{\rm th}$ element of the Dyck path, counted without repetitions.
Note that a number of spaces at the beginning of $L^{(i)}$ are zero. It is our first goal is to calculate where the first non-zero space in each row appears. For $L^{(1)}=B_{u_1x_{s_1+1}\dots x_n}$, where $u_1=x_2\dots x_{s_1}$ is defined by $x_1\dots x_{s_1}$ being the first  zero subword $x_1\dots x_{s_1}$ of $\omega=x_1\dots x_n$ that starts with $x_1$. To get elements of this kind inside $B$, taking into account that each application of the differential cuts out one $\otimes$ symbol, we need to take $s_1-2$ steps. So, $L_i^{(1)}$ is non-zero after $s_1-2$ steps from the left of $L_k^{(1)}$. Since $L^{(2)}=B_{u_2x_{s_2+1}\dots x_n}$ where $u_2=x_3\dots x_{s_1+1}\dots x_{s_2}$, is defined by having  $x_2\dots x_{s_2}$ as the first  zero subword of $x_2\dots x_n$ that starts with $x_2$, to get elements of this kind into $B$ we need to cut out $s_2-s_1-1$ extra tensor symbols. Therefore, $L_i^{(2)}$ will be non-zero after $s_1-2+s_2-s_1-1=s_2-3$ steps. The number of steps which will be added at the third row is $s_3-s_2-1$, so $L_i^{(3)}$ is non-zero after $s_3-4$ steps and so on.

Now consider the step $L^{(r-1)}$: $L^{(r-1)}=B_{u_{r-1}x_{s_{r-1}+1}\dots x_n}$, where
$u_{r-1}=x_r\dots x_{s_{r-1}}$ is defined as follows: $x_{r}\dots x_{s_{r-1}}$ is the first
 zero subword $x_{r-1}\dots x_{s_{r-1}}$ of $x_{r-1}\dots x_n$ that starts with $x_{r-1}$). After $s_{r-1}-r$ steps $L^{(r-1)}$ is non-zero  and it continues as the bar complex of the word $u_{r-1}x_{s_{r-1}+1}\dots x_n$ of length $n-(s_{r-1}+1)+2=n-s_{r-1}+1$. Since this is the last step in the Dyck path, the only relation we have in the word is the whole word, so the complex is very nearly the "free" complex, with the exception that at the last term we have a zero space instead of the one-dimensional space $\{u_{r-1}x_{s_{r-1}+1}\dots x_n\}_k$. Exactly at this last term, the homology is $1$. To get to the place where the nonzero homology appears, we need to do $M-1$ more steps, where $M$ is the length of the free complex on a word of length $n-s_{r-1}+1$. It is easy to see that $M$ for a word of length $k$ is $k$. Hence we have a non-zero homology at the place $n-s_{r-1}+1-1+s_{r-1}-r=n-r$ from the  left-hand side term $B_k$, found at the beginning of the complex $B$.

In this construction we used that $s_1<\dots<s_r$, and $s_r=n$, otherwise the complex would be exact. Indeed, the fact that the two neighboring numbers  coincide: $s_{i-1}=s_{i}$  means that $x_i$ does not appear in relations, hence
 according to Lemma \ref{l2.4} the complex is exact, a case which is excluded from the statement of the theorem. Analogously, if $s_r \neq n$, then $x_n$ does not appear in relations.
\end{proof}

\begin{corollary}\label{E}
The value of the Euler characteristic of the complex $B_w(A)$, $w \in \lll X \rrr$,

$${\cal E}_{B_w} = \sum (-1)^i H_i(B_w)$$

can be only $0,$ $1$ or $-1$.
\end{corollary}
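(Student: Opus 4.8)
The plan is to reduce the statement entirely to the two results already proved: Theorem~\ref{th1.1}, which says the total homology $\dim H_\bullet(B_w)$ lies in $\{0,1\}$, and Theorem~\ref{th2.1}, which pins down the single homological degree in which a nonzero class can occur. The guiding observation is that the sum $\mathcal{E}_{B_w}=\sum_i(-1)^i\dim H_i(B_w)$ is an alternating sum all of whose summands vanish except possibly one, so its absolute value cannot exceed $1$; the only question is which of the three values $\{-1,0,1\}$ is actually attained and when.

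First I would dispose of the exact case. If the complex $B_w$ is exact, then $H_i(B_w)=0$ for every $i$, and hence $\mathcal{E}_{B_w}=0$ straight from the definition. This already accounts for the value $0$.

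In the non-exact case, Theorem~\ref{th1.1} forces the total homology to be exactly one-dimensional, and Theorem~\ref{th2.1} locates that one dimension precisely: writing $r$ for the length of the associated generalised Dyck path, we have $H_{n-r}=1$ and $H_i=0$ for all $i\neq n-r$. Substituting this into the defining alternating sum collapses it to a single term,
$$
\mathcal{E}_{B_w}=(-1)^{n-r}\dim H_{n-r}(B_w)=(-1)^{n-r},
$$
which equals $+1$ or $-1$ according to the parity of $n-r$. Combining this with the exact case yields $\mathcal{E}_{B_w}\in\{-1,0,1\}$, as claimed.

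There is essentially no analytic obstacle here: the corollary is a purely formal consequence of the $0$--$1$ dichotomy and the localisation of homology established above. The one point deserving a line of care is bookkeeping of indices, namely matching the running index of the Euler sum with the indexing $H_i=\ker D_{k-i+1}/\im D_{k-i+2}$ fixed in Theorem~\ref{th2.1}, so that the sign $(-1)^{n-r}$ is attached to the correct term and not off by a degree shift. Once those conventions are aligned the computation is immediate, and in particular all three values $-1,0,1$ are genuinely realisable by choosing words whose Dyck path forces $n-r$ to be even, odd, or the complex to be exact.
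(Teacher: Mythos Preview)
Your argument is correct and matches the paper's intent: the corollary is stated without proof immediately after Theorem~\ref{th2.1} as an evident consequence. One small remark is that you actually need less than you invoke: Theorem~\ref{th1.1} alone already forces $\sum_i \dim H_i(B_w)\in\{0,1\}$, hence at most one $H_i$ is nonzero and $|\mathcal{E}_{B_w}|\leq 1$; Theorem~\ref{th2.1} identifies \emph{which} degree carries the class (and thus the sign), but is not required merely to conclude $\mathcal{E}_{B_w}\in\{-1,0,1\}$.
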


{\bf Remark}

After we found the proof presented above, we realized that the information needed to answer the question about the value and the place of the homology is contained in the Anick construction of n-chains \cite{An,An1, Ufn}.  Our solution, however, only uses  part of the information from the  n-chains; namely, only the places of the ends of chain's elements  are used and the answer on the place of nonzero homology is given in terms of genearalised Dyck path only.  For example, there could be two different chains with the same place of homology.
Consider, for example, the word $xyzz$ in the monomial algebra given by relations $xyz=zz=0$ and
the word  $xxxx$ in the monomial algebra given by relation $xxx=0$. The 2-chains in these two situations are different: $1-3, 3-4$ in the first case, and $1-3, 2-4$ in the second case, but the sequence of ends of chain elements are the same: $d_1=3, d_2=4$.
Consequently, our argument itself is more straightforward, because it does not deal with an extra information, as in the case of n-chains, where arguments are different and much more involved.

\section{Inverting noncommutative series associated to monomial algebras}\label{inv}

As a consequence of Theorem\ref{th1.1}, and actually already of a corollary\ref{E}, the following funny fact can be obtained as an application.

\begin{corollary}\label{m}
Let $A$ be a monomial algebra as above. It has a natural grading by the free noncommutative monoid $\lll X \rrr = \lll x_1,...,x_n \rrr,\,$
$A=\mathop{\oplus}\limits_{m \in \lll X \rrr} A_m$. Then the series of $A$ associated to this grading
$$R=\sum\limits_{m \in \lll X \rrr} \rm{dim} A_m \,\,\, m$$

has a property that all coefficients of $\frac{1}{R}$ are $0,$ $1$ or $-1$.

\end{corollary}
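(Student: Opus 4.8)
The plan is to recognise $1/R$ as the generating function that packages together the Euler characteristics of all the subcomplexes $B_w$, and then to quote Corollary~\ref{E}. First I would fix the only sensible reading of the statement: for $1/R$ to exist, $R$ must carry a nonzero constant term, so I take $R=1+R'$, where the $1$ records the empty monomial (the scalar part of the unitalisation) and $R'=\sum_{m\ \text{nonempty}}\dim A_m\,m$ has zero constant term. Since $\dim A_m\in\{0,1\}$, the series $R'$ is simply the sum of all nonempty monomials that are nonzero in $A$.

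Because $R'$ has no constant term, I can expand formally in the algebra of formal power series over the free monoid $\lll X\rrr$:
$$
\frac{1}{R}=\frac{1}{1+R'}=\sum_{k\ge 0}(-1)^k (R')^k .
$$
This is legitimate coefficient by coefficient: a monomial $w$ of length $n$ factors into at most $n$ nonempty pieces, so only the terms with $k\le n$ contribute to the coefficient of $w$, and the sum defining that coefficient is finite.

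The key step is the combinatorial identification. The coefficient of a fixed nonempty monomial $w$ in $(R')^k$ is the number of factorisations $w=u_1u_2\cdots u_k$ into $k$ nonempty monomials, each nonzero in $A$. This is exactly the dimension of the $k$-th chain group of the subcomplex $B_w$, namely the span of the tensors $u_1\otimes\cdots\otimes u_k$ with $u_1\cdots u_k=w$ and $u_j\neq 0$ in $A$. Hence the coefficient of $w$ in $1/R$ equals $\sum_k (-1)^k \dim (B_w)_k$, the alternating sum of chain-group dimensions, which by the standard identity between the Euler characteristic of a complex and that of its homology equals $\pm\,\mathcal{E}_{B_w}$ (the overall sign depending only on the chosen indexing convention, since the index used in Corollary~\ref{E} differs from the number of tensor factors by an affine shift).

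Finally I would invoke Corollary~\ref{E}: for every $w$ the Euler characteristic $\mathcal{E}_{B_w}$ lies in $\{-1,0,1\}$, a set stable under negation, so every coefficient of $1/R$ attached to a nonempty monomial is $0$, $1$ or $-1$, while the constant term is $1$. This proves the claim. The main obstacle is purely bookkeeping: making the grading bijection between factorisations of $w$ and basis tensors of $B_w$ exact, and correctly inserting the constant term so that $1/R$ is a well-defined series at all. Once the bar complex is seen to split along the monoid grading into the pieces $B_w$ (as recalled in the introduction), the Euler-characteristic computation carries the rest.
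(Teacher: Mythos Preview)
Your proposal is correct and follows essentially the same route as the paper: write $R=1+G$, expand $1/R$ as the alternating geometric series in $G$, identify the coefficient of each $w$ with the Euler characteristic of $B_w$ via the factorisation/tensor-basis bijection, and conclude using the $\{-1,0,1\}$ constraint on $\mathcal{E}_{B_w}$. The only cosmetic difference is that the paper phrases the middle step as ``the Poincar\'e series of the bar complex equals that of its homology'' and cites Theorem~\ref{th1.1} directly, whereas you spell out the combinatorial count and cite Corollary~\ref{E}; these amount to the same thing.
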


Note that there are only a finite number of series on commuting variables with the same property.

\begin{proof}
The series $R$ can be represented as $R=1+G$, where the series of $G$ corresponds to the augmentation ideal of $A$.

The bar complex of $A$ also is graded by $\lll X \rrr$, and the Poincare series of the bar complex is equal to $-G+G^2-G^3+...$, on the other hand this is exactly an expression for $\frac{1}{R}-1$. Since the Poincare series of the bar complex coincides with the Poincare series of its homology, and  since the coefficients of the latter are shown
(Theorem\ref{th1.1}) to be $0$ and $1$, or $-1$, if taken with negative sign into the series, it follows, that $\frac{1}{R}$ also  has  only coefficients $0,$ $1$ or $-1$.

\end{proof}

\section{Operadic version of dichotomy}\label{sec5}

We consider here bar complex $B_w$ associated to a tree $w$ and its subtrees $w_1,...,w_n$.
This is a subcomplex of the bar complex of (symmetric or nonsymmetric) operad  (see \cite{LV}) presented by tree-monomials $w_1,...,w_n$.
We can keep in mind that the bar complex associated with the tree is a special case (see, for example, \cite{KG}) of the Kontsevich graph complex \cite{KGr}.

The maps in the bar complex can be visualized as follows. Let us depict trees from the free operad as trees with (marked) tensors in the vertices. The maps in the bar complex send a tree to the linear combination of trees, where one tensor is substituted by bullet, with signs assigned according to the  Koszul rule. This substitution in our interpretation will serve as an analogue for performing operations or edge contractions.
Whenever the tree contains a subtree $w_i$  with bullets in all internal  vertices, it becomes zero in the monomial operad.

First of all, we consider the question of dichotomy in homology
 in the operadic setting.

\subsection{The 0-1 dichotomy in the homology of monomial operads fails
in the operadic setting }

Here we construct an example of a monomial operad with a pure homology that is concentrated in one place but that is not equal to 1.

%\break

{\bf Example}\label{tree3}

Consider the complex $B_w$ for the following tree $w$:
$$
\begin{pspicture}(0,0)(5,3)
\psline[linewidth=1.2pt](2.5,3)(2.5,2.2)
\psline[linewidth=1.2pt](2.5,3)(1.8,2.2)
\psline[linewidth=1.2pt](2.5,3)(3.2,2.2)
\pscircle[linewidth=.6pt](2.5,2.2){0.2}
\pscircle[linewidth=.6pt](1.8,2.2){0.2}
\pscircle[linewidth=.6pt](3.2,2.2){0.2}
\psline[linewidth=1.2pt](2.5,2.2)(2.2,1.4)
\psline[linewidth=1.2pt](2.5,2.2)(2.8,1.4)
\psline[linewidth=1.2pt](1.8,2.2)(1.5,1.4)
\psline[linewidth=1.2pt](1.8,2.2)(2.1,1.4)
\psline[linewidth=1.2pt](3.2,2.2)(2.9,1.4)
\psline[linewidth=1.2pt](3.2,2.2)(3.5,1.4)
\end{pspicture}
$$
The monomial relations are $w_1, w_2,w_3:$

$$
\begin{pspicture}(0,0)(12,3)
\psline[linewidth=1.2pt](2.5,3)(2.5,2.2)
\psline[linewidth=1.2pt](2.5,3)(1.8,2.2)
\psline[linewidth=1.2pt](2.5,3)(3.2,2.2)
%\pscircle[linewidth=.6pt](2.5,4.2){0.2}
\pscircle[linewidth=.6pt](1.8,2.2){0.2}
\pscircle[linewidth=.6pt](3.2,2.2){0.2}
%\psline[linewidth=1.2pt](2.5,4.2)(2.2,3.4)
%\psline[linewidth=1.2pt](2.5,4.2)(2.8,3.4)
\psline[linewidth=1.2pt](1.8,2.2)(1.5,1.4)
\psline[linewidth=1.2pt](1.8,2.2)(2.1,1.4)
\psline[linewidth=1.2pt](3.2,2.2)(2.9,1.4)
\psline[linewidth=1.2pt](3.2,2.2)(3.5,1.4)
%\end{pspicture}
%$$
%$$
%\begin{pspicture}(0,0)(3,3)
\psline[linewidth=1.2pt](6.5,3)(6.5,2.2)
\psline[linewidth=1.2pt](6.5,3)(5.8,2.2)
\psline[linewidth=1.2pt](6.5,3)(7.2,2.2)
\pscircle[linewidth=.6pt](6.5,2.2){0.2}
%\pscircle[linewidth=.6pt](1.8,4.2){0.2}
\pscircle[linewidth=.6pt](7.2,2.2){0.2}
\psline[linewidth=1.2pt](6.5,2.2)(6.2,1.4)
\psline[linewidth=1.2pt](6.5,2.2)(6.8,1.4)
%\psline[linewidth=1.2pt](1.8,4.2)(1.5,3.4)
%\psline[linewidth=1.2pt](1.8,4.2)(2.1,3.4)
\psline[linewidth=1.2pt](7.2,2.2)(6.9,1.4)
\psline[linewidth=1.2pt](7.2,2.2)(7.5,1.4)
%\end{pspicture}
%$$
%$$
%\begin{pspicture}(0,0)(3,3)
\psline[linewidth=1.2pt](10.5,3)(10.5,2.2)
\psline[linewidth=1.2pt](10.5,3)(9.8,2.2)
\psline[linewidth=1.2pt](10.5,3)(11.2,2.2)
\pscircle[linewidth=.6pt](10.5,2.2){0.2}
\pscircle[linewidth=.6pt](9.8,2.2){0.2}
%\pscircle[linewidth=.6pt](3.2,4.2){0.2}
\psline[linewidth=1.2pt](10.5,2.2)(10.2,1.4)
\psline[linewidth=1.2pt](10.5,2.2)(10.8,1.4)
\psline[linewidth=1.2pt](9.8,2.2)(9.5,1.4)
\psline[linewidth=1.2pt](9.8,2.2)(10.1,1.4)
%\psline[linewidth=1.2pt](3.2,4.2)(2.9,3.4)
%\psline[linewidth=1.2pt](3.2,4.2)(3.5,3.4)
\end{pspicture}
$$
In the complex $B_w$, we have $\dim B_3 = 1$, $\dim B_2 = 3$, and $\dim B_1 = 0$, hence we know that $\dim H_2 = 2.$

Later, in section~\ref{sec7}we will see
that homological purity of monomial operads does not hold in general. In later sections we also develop combinatorial tool of transformations of relations graph, which provides a much easier way to calculate the homologies of quadratic operads.

%$$
%\begin{pspicture}(0,0)(6,3)
%\pscircle[linewidth=.4pt](1,2.5){0.2}
%\put(0.9,2.4){$\scriptstyle S$}
%\psline[linewidth=.8pt]{->}(1,2.3)(1,1.7)
%\put(0.8,2){$\!y$}
%\pscircle[linewidth=.4pt](1,1.5){0.2}
%\psline[linewidth=.8pt]{->}(1,1.3)(1,0.7)
%\pscircle[linewidth=.4pt](1,0.5){0.2}
%\put(0.8,1){$\!z$}
%\put(0.9,0.4){$\scriptstyle T$}
%\pscircle[linewidth=.4pt](4,2.5){0.2}
%\put(3.9,2.4){$\scriptstyle S$}
%\psline[linewidth=.8pt]{->}(4,2.3)(4,1.7)
%\put(3.8,2){$\!x$}
%\pscircle[linewidth=.4pt](4,1.5){0.2}
%\psarc[linewidth=.8pt]{->}(3.6,1.5){0.32}{27}{333}
%\put(3,1.4){$z$}
%\psline[linewidth=.8pt]{->}(4,1.3)(4,0.7)
%\pscircle[linewidth=.4pt](4,0.5){0.2}
%\put(3.8,0.92){$\!x$}
%\put(3.9,0.4){$\scriptstyle T$}
%\psline[linewidth=.8pt]{->}(4.2,1.4)(4.8,0.7)
%\pscircle[linewidth=.4pt](4.9,0.5){0.2}
%\put(4.8,0.4){$\scriptstyle T$}
%\put(4.6,1.2){$\!y$}
%\end{pspicture}
%$$

%\begin{center}
%\setlength{\unitlength}{1.0mm}
%\begin{picture}
%(25,15)(5,5)
%\drawline(15,0)(15,5)
%\drawline(15,5)(20,10)
%\drawline(15,5)(10,10)
%\drawline(20,10)(25,15)

%\drawline(10,10)(5,15)
%\drawline(10,5)(5,15)
%\drawline(10,10)(5,5)

%\drawline(10,10)(5,15)
%\drawline(10,10)(15,15)
%\put(15,5){\circle*{1.7}}
%\put(20,10){\circle{1.7}}
%\put(10,10){\circle*{1.7}}
%\end{picture}
%-
%\begin{picture}
%(25,15)(5,5)
%\drawline(15,0)(15,5)
%\drawline(15,5)(20,10)
%\drawline(15,5)(10,10)
%\drawline(20,10)(25,15)
%\drawline(20,10)(15,15)
%\drawline(10,10)(5,15)
%\put(15,5){\circle*{1.7}}
%\put(20,10){\circle*{1.7}}
%\put(10,10){\circle{1.7}}
%\end{picture},
%\end{center}

\section{Homological dichotomy  holds for monomial operad given by an order  }\label{sec6}

As we can see from the previous section we must impose some conditions on the presentation of monomial operads in order to get an analogue of the 0-1 dichotomy result for monomial algebras.

\begin{definition} We call the tree $w$ with the set of subtrees $w_i$  {\it an order} if the following property is satisfied.
There is a corolla at the bottom of the tree, which is contained in a unique minimal (by inclusion) tree $w_k$ from the set of subtrees $w_i$. Moreover, the same property should be satisfied for the tree $\bar w$ and the family of subtrees $w_i$ obtained from $(w,w_1,...,w_n)$ by substituting the tree $w_k$ with the corolla (performing all its operations). Note that corresponding substitutions are also performed  in all of the remaining trees that intersect with the tree $w_k$ thus transforming them into set $\bar w_i$.
\end{definition}

\begin{theorem}\label{th1.1}
 The full homology of the above defined  subcomplex $B_w$ of the bar complex in the monomial operad associated to an order $(w,w_1,...,w_n)$  is at most one-dimensional:
$$ \dim H_{\bullet}(B_w)={\dim} {Ker} D / {Im} D \in \{0,1\}$$
 \end{theorem}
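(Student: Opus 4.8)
The plan is to transplant the argument proving the algebra dichotomy in Section~\ref{sec2}, with the distinguished bottom corolla supplied by the order structure playing the role of the first letter $x_1$, and to induct on the number of internal vertices of the tree $w$. The base cases (small trees) are checked directly, as in the word case. Throughout I will use, exactly as there, that it suffices to control the parity of $\dim B_w$, since for any complex $\dim H_\bullet(B_w)=2\dim\ker D-\dim B_w$.

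For the inductive step, the order hypothesis gives a bottom corolla $c$ lying in a unique minimal subtree $w_k$. I would split $B_w=E\oplus F$, where $E$ is spanned by those tree-monomials in which $c$ is still a separate tensor factor (the edge joining $c$ to the vertex immediately above it carries a $\o$), and $F$ by those in which that edge has already been contracted (a $\bullet$). Define $J\colon E\to F$ by contracting the edge above $c$, the operadic analogue of $x_1\o u\mapsto x_1u$. The decisive point, and the only place where the order hypothesis is genuinely needed, is that $\ker J$ consists precisely of those trees for which contracting the edge above $c$ completes an all-bullet copy of the \emph{unique} minimal relation $w_k$; hence $\ker J=c\o L$, where $L$ is identified, as a complex, with the bar subcomplex $B_{\bar w}$ of the reduced tree $\bar w$ obtained by collapsing $w_k$ to its corolla, equipped with the induced relations $\bar w_i$ from the definition of order.

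The next step is to compute $D$ on $E$ and on $F$ in terms of $J$ and of the differential $d$ induced on $L$, producing the operadic counterparts of $D(x_1\o u)=J(x_1\o u)-x_1\o du$ and $D(x_1u)=J(x_1\o du)$, with the Koszul signs inserted. Granting these two identities, the short rank computation of Section~\ref{sec2} applies verbatim and yields $H_\bullet(B_w,D)\cong H_\bullet(B_{\bar w},d)$. Since the definition of order is arranged precisely so that $(\bar w,\bar w_1,\dots,\bar w_n)$ is again an order on a tree with strictly fewer internal vertices, the induction hypothesis gives $\dim H_\bullet(B_{\bar w})\in\{0,1\}$, and the same conclusion follows for $B_w$.

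The main obstacle is the pair of steps with no literal analogue in the one-dimensional word setting. First, one must verify that $\ker J$ is cut out by the single subtree $w_k$: this rests essentially on the uniqueness clause in the definition of an order, since if several minimal subtrees met at $c$ the kernel would not reduce to one bar complex and the induction would break; this is also why dichotomy can fail without the order assumption, cf. the example in Section~\ref{sec5}. Second, one must track the Koszul signs attached to edge contractions carefully enough to confirm the two differential identities on the nose, up to the global sign $M(D(v))=-d(M(v))$ already encountered in Lemma~\ref{com}. Once these combinatorial checks are in place, the remainder is the same two-line linear-algebra computation as in Section~\ref{sec2}.
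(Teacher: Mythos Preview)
Your proposal is correct and follows essentially the same approach as the paper's proof, which explicitly states that it ``reconstructs the argument from section~1 in the operadic setting exactly.'' The splitting $B=E\oplus F$ according to whether the distinguished bottom corolla carries a tensor, the map $J$, the identification of $\ker J$ via the unique minimal relation $w_k$ guaranteed by the order hypothesis, the two differential identities, and the final rank computation $\dim H_\bullet(B_w)=2\dim(L\cap\ker d)-\dim L=\dim H_\bullet(L_{w'})$ all match the paper line for line; the recursive clause in the definition of order is precisely what both you and the paper use to close the induction.
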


 The proof  reconstructs the argument from section~1 in the operadic setting exactly.

 \begin{proof}
The prove is again by induction.
 Basis: if the tree word $w$ is empty, the complex is zero and $H(B_w)=0$. If $w=x$ is a corolla, the complex is $0 \longrightarrow k \longrightarrow 0$, and $H(B_w))=k$ is one-dimensional.

 We need to prove:
 ${\dim} {Ker} D / {Im} D \in \{0,1\}.$

 Since
 $ {\dim}  {Im} D   = {\dim} B -     {\dim} {Ker} D,
 $
 it is the same as
$$ {\dim} {Ker} D -  {\dim} {Im} D = $$
 $$ 2 {\dim} {Ker} D -  {\dim} B \in \{0,1\}
 $$

Thus, $\dim H_{\bullet}(B_w,D)=2 {\dim} {Ker} D -  {\dim} B $.

 Let us start from the bottom of our tree, and call the corolla at the bottom of the tree that belongs to the unique subtree from the set of relations $w_i$, $x_1$. Then, we can   split the vector space $B$ as $B=E \oplus F$, where $E=\{x_1\o...\}_k$ and
 $F=\{x_1\star...\}_k$ are subspaces spanned by those tree-monomials where the first corolla is followed directly by a tensor symbol, and by those where it is not respectively.
  Here $\star$ means that in this place and further up the tree operations are either $\o$ or bullet $\cdot$, which stays for the multiplication.

 Consider the linear map
 $ J: E \longrightarrow F$, defined by turning the tensor after this first corolla into a bullet:  $x_1\o u \mapsto x_1u$.

 The kernel of this map is a linear span of tree-monomials of the type $x_1\o x_2...x_s\star ...$,
 where $x_1x_2...x_s $ is this minimal (by inclusion) unique tree-monomial containing corolla $x_1$, which  exists according to the definition of an order.
$${Ker} J=\{x_1\o x_2...x_s\star... \}_k$$

 We denote by $L=\{x_2...x_s\star... \}_k$, where $x_2...x_s$ as above, so ${Ker} J=x_1 \o L$. Essentially, we find the space $L$ that describes our ${Ker}$, which is the key point of our argument.

 First, we calculate the images of differential $D$ on elements from $E$ and $F$, namely:
$$D(x_1\o u)= x_1u- x_1 \o du= J(x_1 \o u) - x_1\o du$$

 Here $d$ is  the same differential as $D$, just acting on the tree-monomial $x_2...x_n$ instead:
 $$D(x_1 u)= x_1 du =J (x_1 \o du).$$

 Calculate now the ${Ker} D$. Let $\ep \in B$ such that $D(\ep)= 0$.
 We know that $B=E \oplus F$, so $\ep $ is uniquely presented as:
 $$ \ep = x_1 u + x_1 \o v.$$

 Applying $D$ to elements from $E$ and $F$ as above, we have
$$0= D(\ep)= J(x_1\o du)+J(x_1 \o v) - x_1 \o dv.$$

 Note that here $J(x_1\o du)+J(x_1 \o v) \in F$ and $x_1 \o dv \in E$.
 Hence this is equivalent to $x_1 \o dv = 0 $ and $J(x_1 \o (du + v))=0$.
 Which means $dv=0$ and $ x_1 \o (du + v) \in {Ker} J=x_1 \o L$.
 Again, the latter two conditions are equivalent to $dv=0$ and $du+v \in L$, which in turn can be reformulated as $dv=0$ and $v\in -du+ L$. The latter two mean $v\in -du+ (L \cap {Ker} d)$.

 Now,
 $$ {\dim} {Ker} D = \dim F + \dim(L \cap Ker d)= \dim E-\dim L+\dim(L \cap Ker d).$$

 Thus
 $$ {\dim} H_{\bullet}(B_w,D)=2\dim Ker D -\dim B= $$
 $$2 \dim  E- 2\dim L + 2\dim (L \cap Ker d)- \dim E- \dim E +\dim L= $$
 $$2\dim (L \cap Ker d) - \dim L = {\dim} H_{\bullet}(L_{w'},d).$$

 Here $d$ is a  bar differential on the tree submonomial  of $w$; starting from corolla $x_2$, $w'=ux_{s+1}...x_n$ and $L_{w'}$ is a bar subcomplex defined by the word $w'$.

 We got that $H_{\bullet}(B_w,D) =  H_{\bullet}(L_{w'},d)$, the latter by inductive assumption is in $\{0,1\}$, thus so is $H_{\bullet}(B_w,D)$.
\end{proof}

In the next section, to clarify the picture and to give a more general tool for the calculation of operadic homology (in the case of monomial operads), we first explain  correspondence between the latter and the homologies of certain complexes for monomial factors of Grassmann algebras.

Then in section \ref{sec9} we consider the case of quadratic operads. For them, we define a graph encoding their relations. In terms of these graphs we describe certain transformations that either  preserve the homology or split it as a sum of homologies of smaller complexes. Using this tool we also
 recursively calculate homologies for the operad of truncated binary trees.

\section{Monomial quotients of Grassmann algebra}\label{sec7}

In this section we explain how homologies of the complex $B_w$ associated with a tree-monomial $w$ in  a monomial operad is in a bijective correspondence with homologies of a quotient of the Grassmann algebra by certain set of monomials.

The latter can be completely characterised in some cases in combinatorial terms, as we will do in the following sections. By this means a number of previously obtained results  for homologies of operads \cite{D} can be recovered. New conditions on the dichotomy in homology and homological purity for operads are obtained as well.

The following statement establish a bijective correspondence  between operadic and Grassmann homologies.

 For any operad ${\cal P}=({\cal V}, {\cal R})$ presented by the set of generators  ${\cal V}=\{v_1,...,v_n\}$ and relations $ {\cal R} = \{w_1,...,w_r\}$, which are tree-monomials, the complex ${\cal B}_{w,w_1,...,w_r}$  defined for a tree-monomial $w$ is quasi-isomorphic to the following complex
${\cal B}_{w,w_1,...,w_r}^{Gr}$ in the quotient of the Grassmann algebra. Let $A$ be a Grassmann algebra $A=\lll x_v, v \in {\cal V} \vert x_{v_i} x_{v_j} = - x_{v_j} x_{v_i}, \,\, \forall i \geq j \rrr$ and $\bar A=A/M$ be the quotient by the ideal $M$ generated by monomials $w_1,...,w_r$, considered as elements of $A$ (that is defined  just by its set of variables).

\begin{proposition}
For any fixed monomial $w$ we define a complex of Grassmann algebras
${\cal B}_{w,w_1,...,w_r}^{Gr}$ with the following differential:
$d(u)=u(x_1+...+x_n), \,\, \forall u \in \bar A $.
Then,
$H_{\bullet }({\cal B}_{w,w_1,...,w_r})=H_{\bullet}({\cal B}_{w,w_1,...,w_r}^{Gr}(\bar A)).$

Moreover the homology of any monomial quotient of Grassmann algebra can be presented as a homology of some monomial operad.

\end{proposition}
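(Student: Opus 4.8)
The plan is to produce an explicit isomorphism of chain complexes between $\mathcal{B}_{w,w_1,\dots,w_r}$ and $\mathcal{B}^{Gr}_{w,w_1,\dots,w_r}(\bar A)$, after which the stated equality of homologies is immediate. First I would fix the combinatorial bookkeeping: a basis element of $\mathcal{B}_w$ is the fixed tree shape $w$ together with a marking of each internal vertex by either a tensor $\otimes$ or a bullet (a performed operation), the element being zero exactly when some subtree $w_j$ has all of its internal vertices bulleted. Assigning to the internal vertices of $w$ the Grassmann generators $x_1,\dots,x_n$, I would send such a marked tree to the square-free monomial $x_S=\prod_{i\in S}x_i\in\bar A$, where $S$ is the set of bulleted vertices, with a fixed ordering of the vertices chosen to pin down the sign. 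This is a bijection on basis elements, and it carries the homological grading of $\mathcal{B}_w$ (the number of tensors, $n-|S|$) to the Grassmann degree $|S|$; the grading is thus reversed, which is harmless and consistent with the ``place $n-r$'' phenomenon of Theorem~\ref{th2.1}.

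Second, I would check that the two notions of ``zero'' coincide. In $\bar A$ the monomial $x_S$ vanishes exactly when $S$ contains the variable-set of some relation $w_j$, while on the operadic side a marked tree vanishes exactly when some relation subtree is entirely bulleted. Since each $w_j$ passes to $\bar A$ precisely \emph{via} its set of variables, these are the same upward-closed condition on $S$, so the bijection descends to the quotients.

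Third, and this is the technical heart of the forward direction, I would verify that the bijection is a chain map. The operadic differential turns one tensor into a bullet with a Koszul sign depending on the position of that vertex among the remaining tensors; on the Grassmann side $d(x_S)=x_S\cdot(x_1+\dots+x_n)=\sum_{i\notin S}(-1)^{\#\{j\in S:\,j>i\}}\,x_{S\cup\{i\}}$, the terms with $i\in S$ dropping out because $x_i^2=0$. With a consistent ordering of the vertices the anticommutation sign $(-1)^{\#\{j\in S:\,j>i\}}$ reproduces the Koszul sign term by term, so the square commutes; in particular $d^2=0$ is explained conceptually by $(x_1+\dots+x_n)^2=\sum_i x_i^2+\sum_{i<j}(x_ix_j+x_jx_i)=0$, mirroring $D^2=0$. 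This sign-matching is routine but is the only place where real care is needed in this direction.

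For the converse (the ``moreover''), I would run the bijection backwards: given a monomial quotient $\bar A=A/(m_1,\dots,m_s)$ on variables $x_1,\dots,x_N$ with differential $\cdot(x_1+\dots+x_N)$, I would build a tree $w$ whose internal vertices are indexed by the variables and choose tree-monomial relations $w_j$ whose vertex-sets are the supports $\mathrm{supp}(m_j)$, so that the operadic bar complex reproduces the given complex on the nose. \textbf{The main obstacle lies exactly here:} an arbitrary minimal generator $m_j$ has an arbitrary support $\mathrm{supp}(m_j)\subseteq\{1,\dots,N\}$, whereas an operadic relation must appear as an honest \emph{subtree} of $w$, i.e.\ as a connected piece, and not every family of subsets can be realized simultaneously as subtrees of one tree on $N$ vertices (for instance $\{1,2\},\{2,3\},\{1,3\}$ cannot). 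I therefore expect the proof to require a more flexible construction --- taking $w$ sufficiently branched, permitting auxiliary internal vertices, or replacing the given complex by a chain-isomorphic re-presentation --- engineered so that every relation support becomes a subtree while the surviving markings still biject with the surviving monomials of $\bar A$. Once such a realizing tree is produced, the forward correspondence already established delivers the equality of homologies and completes the claim.
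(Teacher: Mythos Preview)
Your forward direction is correct and in fact more detailed than the paper's own treatment, which simply declares this half ``obvious and well-known'' and gives no argument. Your description of the bijection --- marked trees to square-free monomials, with the zero conditions and the Koszul signs matched term by term --- is exactly the content the paper takes for granted.

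For the converse you correctly identify the obstruction but stop short of resolving it, and this is the one genuine gap. The paper's resolution is precisely the ``sufficiently branched'' tree with an ``auxiliary internal vertex'' that you anticipate, and it is disarmingly simple: take $w$ to be the star-shaped tree with a single $n$-ary operation at the root and $n$ distinct binary operations hanging off as its children. In this tree \emph{every} subset $\{i_1,\dots,i_k\}\subseteq\{1,\dots,n\}$ of the children, together with the root, forms a connected subtree, so the tree-relation corresponding to an arbitrary Grassmann monomial $x_{i_1}\cdots x_{i_k}$ is just the subtree spanned by the root and those $k$ children. Your worry about families like $\{1,2\},\{2,3\},\{1,3\}$ being unrealizable evaporates because all relation subtrees are stars around the common root; the root is the auxiliary vertex doing all the work. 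Once you write this tree down, your own forward correspondence finishes the argument.
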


\begin{proof} The first part of the proposition is obvious and seems to be well-known.

The second part we can prove by presenting an operad with the same homology as any given monomial quotient of the  Grassmann algebra. The data which defines the homology of a monomial quotient of a Grassmann algebra $\bar A =A/M$ is the set of generators of $A$: $v_1,...,v_n$, and each generating monomial  $w_i \in M$ is defined by the subset $\{ v_{i_1},...,v_{i_k} \} \subset \{ v_1,...,v_n \}$. Thus the set $\{ v_1,...,v_n \}$ and $r$ of its arbitrary subsets define the homology of a Grassmann algebra.

Using this data we construct the following operad  to have the same homology as of
${\cal B}_{w}$. Take an operad with one $n$-ary operation  and $n$ binary operations,
a tree $w$ of the shape:

$$
\PICTEN
$$

and relations $w_1,...,w_r$ will be defined by subsets $w_i$ corresponding to subsets
$\{v_{i_1},...,v_{i_k} \} \subset \{v_1,...,v_n \}$

$$
\PICELEVEN
$$

%pic.11

\end{proof}

This means that in terms of homologies of ${\cal B}_w$ in monomial operad, these trees describe everything.

Now we present examples, showing that an operadic homology does not satisfy 1-0 dichotomy, nor homological purity. In this case we calculate homologies using the language of Grassmann algebras.

\begin{example}\label{71}
The following operadic complex ${\cal B}_{w}$
gives an example of non-pure homology: $H_1({\cal B}_{w})=H_2({\cal B}_{w})=1\neq 0$

$
\PICTWELVE
$

%Pic. 12
{\rm
The relations are: $(123)(14)(24)(34)$

The complex  ${\cal B}_{w}$ is}

$0 \longrightarrow V_1 \longrightarrow V_2 \longrightarrow 0$

$H_1=H_2=1\neq 0$

\end{example}

\begin{example}\label{72}
Let us consider complex ${\cal B}_{w}(w_1,...,w_6) $  defined for the full binary tree and set of monomial relation, consisting of the trees
which contain three binary operations:

$$
\PICTHIRTEEN
$$

%pic. 13

Then $ \dim H_1=0, \dim H_2=3$.
\end{example}

In this case the homology is pure, but not equal to 0 or 1.

The  corresponding Grassmann algebra for the tree-monomial $w$

$$
\PICFOURTEEN
$$

%pic. 14

is the following:

$\bar A_{Gr}= \lll x_1x_3=x_1x_4=x_1x_2=x_2x_5=x_2x_6=x_3x_4=x_5x_6=0 \rrr.$

The differential $d\xi = \xi (x_1+...+x_6)$ on $x_i$ acts as follows:

$x_1 \to x_1x_5+x_1x_6$

$x_2 \to x_2x_3+x_2x_4$

$x_3 \to -x_2x_3 + x_3x_5 + x_3x_6$

$x_3 \to -x_2x_3 + x_3x_5 + x_3x_6$

$x_4 \to -x_2x_4 + x_4x_5 + x_4x_6$

$x_5 \to -x_1x_5 - x_3x_5 - x_4x_5$

$x_6 \to -x_1x_6 - x_3x_6 - x_4x_6$

$V_2 = \lll x_1x_5, x_1x_6, x_2x_3, x_2x_4, x_3x_5, x_3x_6, x_4x_5, x_4x_6 \rrr_k,$

amongst $d(x_1),...,d(x_6)$ there are 5 linearly independent, so
$ \dim H_1=0, \dim H_2=3$.

We now have an example where homological purity holds, but not the 0-1 dichotomy for the dimension of the homology.

Note that the typical linear-algebraic method for calculating  homologies that we used here is difficult to extend to trees where data is more complicated. Motivated by this problem, we developed in section~\ref{sec9} some quite different methods. These methods are based in arguments of the same nature as in theorem~\ref{th1.1}, consisting of splitting the homology in accordance with certain transformations of graph relations.

\section{Combinatorial conditions for dichotomy in terms of Grassmann algebras}\label{sec8}

We aim to formulate here the condition of being an {\it order} from section \ref{sec6} in combinatorial terms, as well as some other conditions to ensure that the dichotomy in homology will hold.

First, let us define the following recurrently formulated condition on the set $X=\{x_1,...,x_n\}$ with its subsets $R_1,...,R_n$.

We suppose throughout this section that subsets $R_1,...,R_n$ are reduced, that is, that  there are no $R_i \subset R_j$ for $i\neq j$.
This can be obtained by throwing away unnecessary relations that follow from others.

\begin{definition}\label{def1} We call the system $X, R_1,...,R_m$, $X=\{x_1,...,x_n\}$,  $R_i \subset X$ {\it basic}, if
there is a point $x_1 \in X$ which belongs to exactly one subset from $R_1,...,R_m$,
IN this case, we call the system {\it $x_1$-basic}.
\end{definition}

\begin{definition}\label{def2} We say that the system $(X, R_1,...,R_m)$ is {\it an order} if:

(1) $(X, R_1,...,R_m)$ is basic

(2) there is a sequence $x_{i_1},...,x_{i_m}$ with $x_{i_k} \in X$, such that consequent
$x_{i_k}$-contraction transformations of
  $(X, R_1,...,R_m)$ consist of basic systems, and the result of the last transformation  is a one-point system: $(X=\{x_i\}, R=\{x_i\})$.
\end{definition}

\begin{definition}\label{def3} Let  $(X, R_1,...,R_m)$ be a basic system, a system $(\bar X, \bar R_1,...,\bar R_t)$ is called its {\it contraction transformation}, if it is obtained in the following way.

We choose a point $x_1 \in X$ which is contained in a unique subset $R_1 \in \{R_1,...,R_m\}$.

Next we contract the set $R_1$ into a point, which means that
the new set of points is $\bar X = \{\bar x_1, x_2,...,\hat x_j,...,x_n\bar\}$, where we throw away all points $x_j \in R_1, j\neq 1$.

The new set of subsets $\bar R_j$ are:

$\bar R_j = R_j$ if $R_J \cap R_1 = \emptyset$

$\bar R_j = (R_j \cap \{\bar x_1\}) \ \{x_s | x_s \in R_1, s \neq 1 \}$,
if $R_j \cap R_1 \neq \emptyset$.

We call this an {\it $x_1$-contraction transformation}.
\end{definition}

This definition of an order is a combinatorial reformulation of the definition of an order from section \ref{sec6}.

\begin{proposition}
If there is a point $x_i \in X$ which does not  belong in any of  $R_j$, then the homology of the Grassmann algebra defined by the data set $(X,R_i)$ is zero.
\end{proposition}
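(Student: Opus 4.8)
The plan is to read this as the Grassmann-algebra counterpart of Lemma~\ref{l2.4}: just as a letter absent from all relations forced the bar subcomplex of a monomial algebra to be exact, here a variable $x_i$ absent from every subset $R_j$ should force the complex $(\bar A, d)$, with $d(u)=u\,s$ and $s=x_1+\dots+x_n$, to be acyclic. Rather than count dimensions, I would prove acyclicity directly by exhibiting a contracting homotopy, since this is both shorter and more transparent. First note that $d$ is well defined on $\bar A=A/M$ (because $M$ is an ideal and $Ms\subseteq M$) and genuinely squares to zero, since $x_j^2=0$ and $x_jx_l=-x_lx_j$ force $s^2=0$ in the Grassmann algebra.

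The homotopy I would use is the interior product (contraction) $\iota=\iota_{x_i^{*}}$ along the free variable, i.e.\ the odd derivation determined by $\iota(x_j)=\delta_{ij}$. The one computation that matters is the antiderivation identity applied to right multiplication by $s$:
\[
\iota(\omega\wedge s)=\iota(\omega)\wedge s+(-1)^{|\omega|}\,\omega ,
\]
which holds because $\iota(s)=1$ (the coefficient of $x_i$ in $s$ equals $1$). On the full Grassmann algebra $A$ this says that, after absorbing the degree-dependent sign into the homotopy (setting $h=(-1)^{p-1}\iota$ on the degree-$p$ component), one has $d\,h+h\,d=\mathrm{id}$; the sign bookkeeping is routine. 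Thus $(A,d)$ is already contractible, and the entire issue is to push this homotopy through the quotient.

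The hard part is therefore to check that $\iota$ descends to $\bar A$, i.e.\ that $\iota(M)\subseteq M$, and this is exactly where the hypothesis $x_i\notin R_j$ enters. I would argue that $M$, being generated by the squarefree monomials $w_j$ in the (graded-)commutative algebra $A$, is a monomial ideal: it is spanned as a vector space by those basis monomials whose support contains the support $R_j$ of some generator. Since $x_i$ lies in no $R_j$, applying $\iota$ to such a basis monomial either yields $0$ (if $x_i$ is absent) or the same monomial with $x_i$ deleted (up to sign); in the latter case the support still contains $R_j$, so the result stays in $M$. Hence $\iota(M)\subseteq M$.

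With $\iota(M)\subseteq M$ and $Ms\subseteq M$ both established, $d$ and $h$ induce maps $\bar d,\bar h$ on $\bar A$, and the identity $\bar d\,\bar h+\bar h\,\bar d=\mathrm{id}_{\bar A}$ is inherited from $A$. A contractible complex has vanishing homology, so $H_{\bullet}(\bar A,d)=0$, as claimed. I would close by remarking that this is the precise analogue of Lemma~\ref{l2.4} and, via the proposition relating operadic and Grassmann homologies, yields the corresponding vanishing for the operadic complex $\mathcal B_w$.
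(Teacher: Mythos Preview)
Your argument is correct. Note, however, that the paper states this proposition without an explicit proof; it is offered as the Grassmann-algebra counterpart of Lemma~\ref{l2.4}. So the relevant comparison is with the \emph{method} the paper uses for the analogous statements.

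For Lemma~\ref{l2.4} the paper pairs monomials with and without a tensor after the free letter, concludes that the complex has even dimension, and then invokes the 0--1 dichotomy of Theorem~\ref{th1.1} to force exactness. That line of reasoning does \emph{not} transfer directly to the present setting: as Sections~\ref{sec5} and~\ref{sec7} show, the dichotomy fails for general monomial Grassmann quotients, so ``even dimension'' alone would prove nothing here. The closest Grassmann argument in the paper is the lemma inside the proof of Theorem~\ref{9.3}, which produces a quasi-isomorphic subcomplex $Y_x$; specialised to a vertex $x_i$ lying in no relation one gets $Y_{x_i}=0$ and hence $H^\bullet=0$, but that lemma is only formulated for quadratic monomials.

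Your contracting homotopy via $\iota_{x_i^{*}}$ avoids all of this. It needs neither the dichotomy theorem nor the quadratic hypothesis, and the single substantive check --- that $\iota(M)\subseteq M$ because removing $x_i$ from a monomial cannot destroy containment of any $R_j$ --- is exactly where the assumption $x_i\notin\bigcup_j R_j$ is consumed. This is genuinely cleaner and more general than what the paper's toolkit would yield for this proposition.
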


\begin{theorem}
If $(X, R_1,...,R_m)$ is an order,  then the homology of the Grassmann algebra defined by this data satisfies 0-1 dichotomy.
\end{theorem}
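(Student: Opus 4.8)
The plan is to reproduce, in the combinatorial/Grassmann language of Definitions~\ref{def1}--\ref{def3}, the inductive mechanism of Theorem~\ref{th1.1} and Section~\ref{sec6}, inducting on the length $m$ of the contraction sequence provided by the order condition. For the base case, when the system has been reduced to the one-point order $(\{x\},\{x\})$, the quotient Grassmann algebra is $\bar A=k\cdot 1$ with differential $0$, so $\dim H_\bullet=1\in\{0,1\}$. For the inductive step I would use the basic point: choose $x_1\in X$ lying in a unique relation $R_1$, and split the underlying space as $\bar A=E\oplus F$, where $E$ is spanned by the nonzero monomials whose support contains $x_1$, and $F$ by those whose support avoids $x_1$.

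The heart of the step is a mapping-cone description. Write $\omega=x_1+\dots+x_n$, set $\omega'=\omega-x_1$ and $d'=\,\cdot\,\omega'$. Multiplication by $x_1$ gives an onto map $J:F\to E$, $J(u)=x_1u$; since $x_1$ sits in only $R_1$, its kernel $L=\ker J$ is exactly the span of the $x_1$-free monomials whose support contains $R_1\setminus\{x_1\}$, and $L$ is a $d'$-subcomplex of $F$. A direct computation of $D$ on the two summands gives $D(u)=\pm J(u)+d'(u)$ for $u\in F$ and $D(x_1 u)=\pm x_1 d'(u)$, so that in block form with respect to $E\oplus F$ the differential is upper-triangular with off-diagonal entry $J$; that is, $\bar A$ is the mapping cone of $J\colon (F,d')\to(E,d')$. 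Under the identification $E\cong F/L$ the map $J$ becomes the canonical projection $F\to F/L$, and the cone of a surjection is quasi-isomorphic to the shifted kernel, whence $H_\bullet(\bar A,D)\cong H_\bullet(L,d')$ up to a degree shift.

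Next I would identify $(L,d')$ with the Grassmann complex of the $x_1$-contracted system $(\bar X,\bar R_1,\dots)$ of Definition~\ref{def3}: writing each generator of $L$ as $(R_1\setminus\{x_1\})\cdot m'$ with $m'$ supported on $X\setminus R_1$ and sending it to $\bar x_1\cdot m'$ intertwines $d'$ with the contracted differential $\,\cdot\,\bar\omega$; and this is exactly where the rewriting rule for the $\bar R_j$ in Definition~\ref{def3} enters, since it is what guarantees that a monomial is nonzero in $L$ precisely when its image avoids all $\bar R_j$. Because the contracted system is again an order, witnessed by the tail $x_{i_2},\dots,x_{i_m}$ of the order sequence, the inductive hypothesis yields $\dim H_\bullet$ of the contracted complex in $\{0,1\}$, and hence the same for $H_\bullet(\bar A)$.

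The hard part is this identification, and more subtly the following point. Unlike the associative case of Section~\ref{sec2}, where $x_1\o u$ faithfully records $u$ and one may simply track $2\dim\ker D-\dim B$, here multiplication by $x_1$ is \emph{not} injective, so that naive bookkeeping fails: the summand $E$ is a proper quotient $F/L$, not a second copy of $F$. This is what forces the mapping-cone argument, and one must check carefully that the reduced complex is $L=\ker J$ and not the $x_1$-part $E$ (which in general has different homology), as well as that the contraction genuinely preserves the order property so the induction closes. As an independent cross-check, the statement also follows from the correspondence of Section~\ref{sec7} together with the theorem of Section~\ref{sec6}: Definition~\ref{def2} is the combinatorial form of an operadic order, and $H_\bullet(\mathcal{B}_w^{Gr}(\bar A))=H_\bullet(\mathcal{B}_w)$, whose dimension lies in $\{0,1\}$ for orders.
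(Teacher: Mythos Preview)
Your proposal is correct. The paper does not write out a proof of this theorem: it simply records, just before the statement, that Definition~\ref{def2} is the combinatorial reformulation of the operadic order of Section~\ref{sec6}, and leaves the result to follow from that together with the Grassmann--operad correspondence of Section~\ref{sec7}. This is precisely the cross-check you give in your last paragraph, so in that sense you have reproduced the paper's (implicit) argument.

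Your main argument, the mapping-cone description of $(\bar A,D)$ and the identification $H_\bullet(\bar A)\cong H_\bullet(L,d')$, is a different and more self-contained route: it replays the induction of Section~\ref{sec6} directly in the Grassmann language without detouring through the operadic construction. Your observation that here $E\cong F/L$ rather than $E\cong F$, so the $2\dim\ker D-\dim B$ bookkeeping of Section~\ref{sec2} must be replaced by the cone-of-a-surjection statement $\mathrm{cone}(J)\simeq (\ker J)[1]$, is exactly the right diagnosis, and your identification of $(L,d')$ with the contracted complex correctly pinpoints where the rewriting rule for $\bar R_j$ in Definition~\ref{def3} is used. The paper later proves essentially the same reduction (that $H_\bullet(X)\cong H_\bullet(Y_x)$ for the annihilator subcomplex) in the quadratic case, inside the proof of Theorem~\ref{9.3}; your argument is the general form of that lemma, so what you have written would in fact serve as an independent proof not relying on Sections~\ref{sec6}--\ref{sec7}.
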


Another, more constructive way to define the same condition is the following.

\begin{definition}\label{def4}  The system $(X, R_1,...,R_m)$ is an order if  there is an ordering of subsets $\{R_i\}$
$R_{i_1},...,R_{i_m}$ such that

(1) there is a point $x_1\in X$ which belongs only to  $R_{i_1}$.

(2) there is a point $x_2\in X$ which belongs only to  $R_{i_2}$ or

$ \bar R_{i_2} \cap R_{i_1} \neq \emptyset,  R_{i_k} \cap \bar R_{i_2} = \emptyset, \, \forall k \geq 3.$

(the latter corresponds to a situation where the unique point does not exist in $R_{i_2}$ but will appear  - in $\bar R_{i_2}$ - after the contraction transformation).

(3) there is a point $x_3\in X$, which belongs only to  $R_{i_3}$ or

$ R_{i_3} \cap \bar R_{i_2} \neq \emptyset,  R_{i_k} \cap \bar R_{i_2} = \emptyset, \, \forall k \geq 4,$

where $\bar R_{i_2}$  is a set obtained from $R_{i_2}$ after $x_2$-contraction transformation).

[In terms of original sets it means

 $ R_{i_3} \cap  R_{i_2} \neq \emptyset$ or $( R_{i_1} \cap  R_{i_2} \neq \emptyset$ and
$R_{i_1} \cap  R_{i_3} \neq \emptyset)$

 and

 $ R_{i_k} \cap  R_{i_2} \neq \emptyset$ and $( R_{i_1} \cap  R_{i_2} = \emptyset$ or
$...R_{i_1} \cap  R_{i_k} = \emptyset) \forall k \geq 4$.]

(...)

(n) there is a point $x_n \in X$, which belongs only to  $R_{i_n}$ or

$R_{i_{n-1}} \cap \bar R_{i_2} \neq \emptyset$, where $ \bar R_{i_{n-1}}$
 is a set obtained from $R_{i_{n-1}}$ after $x_{i_{n-1}}$-contraction transformation).

\end{definition}

We list the cases where the above condition of being an order is satisfied.

\begin{corollary} Particular cases of orders for which the dichotomy theorem is true are the following.

\proof

(1) For any set $R_i$ there is a point unique to this set.

 Definition \ref{def4} is obviously satisfied.

(2) there are sets $A_1^*,...,A_k^* \in \{ R_i \}$ which  have a unique point and sets $S_1,...,S_l \in \{ R_i \}$ which do not.

The condition is that there exists $S_0^* \in \{A_1^*,...,A_k^*\} $ such that $S_0 \cap S_1 \neq \emptyset,
S_0 \cap S_i =  \emptyset, \, \forall i \geq 2$, $(S_0 \cup S_1) \cap S_2 \neq \emptyset,  $
$(S_0 \cup S_1) \cap S_i = \emptyset, \, \forall i\geq 3,$ etc.

\proof This condition ensures that if not all sets have a unique points, they will acquire them after a certain number of contraction transformations (in a certain way).

(3) The elements of the set $\{R_i\}$ can be positioned at the vertices of the tree with the following property.

(a) two vertices $i$ and $j$ are joined by joined by an edge if $R_i \cap R_j \neq \emptyset$.

(b) the leaves of the tree are exactly the subsets $R_i$th which do have a unique element.

\proof After contracting  each  leaf-set we  again  get a tree with the same property.
Thus, repeating the process, we eventually can contract the tree to one point $(X=\{x\}, R=\{x\})$, which is a stable configuration under the contraction transformation with the homology 1.

\end{corollary}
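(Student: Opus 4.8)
The plan is to prove each of the three cases by verifying directly that the configuration $(X,R_1,\dots,R_m)$ is an order in the sense of Definition~\ref{def2}, and then to invoke the dichotomy theorem for orders proved just above, which immediately places the homology in $\{0,1\}$. Concretely, in each case I would exhibit a sequence of $x$-contraction transformations (Definition~\ref{def3}) such that every intermediate system is basic (Definition~\ref{def1}) and the final one is the one-point system $(\{x\},\{x\})$; by Definition~\ref{def2} this certifies that the starting configuration is an order.

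For case (1) the key observation is that the property ``every $R_i$ contains a point lying in no other $R_j$'' is an invariant of contraction. Indeed, if $x_1$ is the point unique to $R_1$ and we perform the $x_1$-contraction, the only points deleted are those of $R_1$; the point $y_j$ unique to any other $R_j$ satisfies $y_j\notin R_1$, so it survives and, being absent from every other set beforehand, remains unique to $\bar R_j$. Thus each intermediate system is basic, and an induction on $m$ (contracting one relation at a time) brings us down to a single point. This is the easy case.

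For case (3) I would argue by induction on the number of sets, using that a finite tree always has a leaf. A leaf $R_i$ has a unique element by hypothesis~(b), so the system is basic; after the $x_1$-contraction of $R_i$ I claim the surviving sets again form a tree whose leaves are exactly the unique-element sets. The point is that deleting a leaf from a tree leaves a tree, that the contraction point $\bar x_1$ can only enter the former neighbour $R_p$ of $R_i$ (so no spurious edges are created and the intersection graph is exactly the old tree with $R_i$ removed), and that the other leaves keep their unique points, which lay outside $R_i$; moreover, if $R_p$ degenerates to a leaf it acquires $\bar x_1$ as a fresh unique point. Applying the inductive hypothesis, with the single-relation system (and the degenerate two-vertex tree) as base cases, completes the argument.

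The main obstacle is case (2), where the sets $S_1,\dots,S_l$ have no unique point to begin with and must acquire one only after contractions. The chain hypothesis, asserting $(S_0\cup\cdots\cup S_{j-1})\cap S_j\neq\emptyset$ while $(S_0\cup\cdots\cup S_{j-1})\cap S_i=\emptyset$ for all $i>j$, is precisely what is needed to match the second (post-contraction) branch in each clause of the equivalent Definition~\ref{def4}: contracting the seed $S_0$ (an $A^*$-set, hence basic) produces a point lying in $S_1$ and in none of $S_2,\dots,S_l$, i.e. a point unique to $S_1$; contracting $S_1$ then feeds a unique point to $S_2$, and so on along the chain. The delicate part, where I would spend the most care, is to confirm that each newly created contraction point is unique not merely among the $S$-sets but with respect to the surviving $A^*$-sets as well, and that the disjointness relations $\cap S_i=\emptyset$ are genuinely preserved by each contraction (a contraction can absorb a point into neighbouring sets, potentially spoiling a disjointness further down the chain). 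I would dispose of this by first contracting, as in case (1), all $A^*$-sets not involved in the chain---which does not destroy any unique points---thereby reducing to the pure chain $S_0,S_1,\dots,S_l$, and then contracting along the chain while checking at each step that the basic property persists. Once the chain is exhausted the configuration is the one-point system, establishing that it is an order and, by the dichotomy theorem, that its homology is $0$ or $1$.
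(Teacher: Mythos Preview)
Your proposal is correct and follows essentially the same approach as the paper: in each case you verify the system is an order by exhibiting a sequence of contractions terminating in the one-point system, exactly as the paper's (very terse) sketches indicate. Your treatment is considerably more detailed than the paper's---in particular, your explicit check in case~(1) that unique points survive contraction, your analysis in case~(3) of why the tree structure and leaf-uniqueness persist, and especially your identification in case~(2) of the subtlety concerning interaction between the $A^*$-sets and the chain (and your proposed reduction by contracting the extraneous $A^*$-sets first) all go beyond what the paper writes out; the paper simply asserts that contractions eventually furnish the missing unique points without addressing these points.
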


The property of being an order, and stronger conditions $(1)-(3)$ are generalisations of the property of being a chain in the case of subsets $R_1,...,R_n$ which are intervals within the word.

\section{Homologies of quadratic operads}\label{sec9}

We describe here some  operations on homology in terms of transformations of graphs which encode relations for a quadratic monomial operad.

Using these techniques, one can calculate operadic homologies in certain examples. We present here an example of such a calculation for the quotient of binary tree by all degree two relations (which involve three binary operations), a so called truncated free binary operad $T^{(3)}=T/V^{(3)}$.

The data is as before: a tree $T$ on a set of vertices $X$, usually finite, with a set of subtrees, uniquely defined by subsets of vertices $X_1,...,X_r$, $X_i \subset X$.

The case we consider in this section is when all subsets $X_i$-th have no more than two elements. It is the case of a monomial quadratic operad, i.e. the operad is  given by tree-relations with two internal vertices.

This data can be encoded by the {\it relations graph} $G_R = G_{T,X,\bar X_i}$ with the set of the internal vertices from $X$, as a set of vertices. There is an edge between vertices $x_i$ and $x_j$ in $G_R$ if and only if there is a subset $X_k \subset X$, $X_k=\{x_i,x_j\}$, that is these two vertices form a relation.

The following facts about the homologies of operads that are defined by generators $X$ and relations $X_1,...,X_r$ hold (in terms of relation graph $G_R$).

\begin{proposition}\label{pr0} If there is an isolated vertex $x \in X$ in $G_R$
then the homology is zero.

\end{proposition}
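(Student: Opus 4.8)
The plan is to pass to the Grassmann picture and exhibit an explicit contracting homotopy built from the isolated vertex. By the correspondence of Section~\ref{sec7}, it suffices to show that the Grassmann complex $(\bar A, d)$, with $d(u)=u\,\xi$ and $\xi=\sum_{v} x_v$, is acyclic. Since the operad is quadratic, every relation $X_k$ consists of exactly two internal vertices and hence corresponds to an edge of $G_R$; consequently an isolated vertex $x$ belongs to no $X_k$, so the Grassmann generator $x$ occurs in none of the relation monomials $m_k=\prod_{v\in X_k}x_v$ generating the ideal $M$ with $\bar A=A/M$. This is in fact exactly the quadratic instance of the Proposition of Section~\ref{sec8}, and one could simply invoke it; I prefer to give the homotopy directly, because --- unlike the algebra case of Lemma~\ref{l2.4} --- the parity-of-dimension argument is unavailable here, the $0$--$1$ dichotomy having already been seen to fail for operads.

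First I would define the homotopy $h=\iota_x$ to be contraction (interior product) against the isolated generator $x$: on a squarefree monomial it deletes the factor $x$ with the Koszul sign if $x$ is present, and returns $0$ otherwise. On the full exterior algebra $A$ the canonical anticommutation relation gives $\iota_x\,\mu_{x_i}+\mu_{x_i}\,\iota_x=\delta_{x,x_i}\,\mathrm{id}$, where $\mu_{x_i}$ denotes multiplication by $x_i$ in the one-sided convention fixing $d=\sum_i\mu_{x_i}$. Summing over the generators appearing in $\xi$ and using that $x$ itself is one of them, I obtain the operator identity $d\,h+h\,d=\mathrm{id}$ on $A$.

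The next step is to check that both $d$ and $h$ descend to the quotient $\bar A=A/M$, so that the homotopy identity descends as well. For $d$ this is immediate, since $M$ is a two-sided ideal and $d$ is multiplication by $\xi$. For $h=\iota_x$ I would use that $\iota_x$ is an odd derivation: on a product $c\,m_k$ with $m_k$ a relation monomial one has $\iota_x(c\,m_k)=\iota_x(c)\,m_k\pm c\,\iota_x(m_k)$, and $\iota_x(m_k)=0$ precisely because $x$ does not occur in $m_k$; hence $\iota_x(M)\subseteq M$ and $h$ is well defined on $\bar A$. The identity $d\,h+h\,d=\mathrm{id}$ therefore holds on $\bar A$, so the complex is contractible and $H_{\bullet}(\bar A,d)=0$; by the correspondence of Section~\ref{sec7} this is the homology of $\mathcal{B}_w$, which is what we wanted.

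The main obstacle I anticipate is bookkeeping rather than conceptual: getting the Koszul/anticommutation signs right for the chosen one-sided convention of $d$, and being careful that the isolated-vertex hypothesis is used in exactly two places --- to guarantee that $x$ is one of the summands of $\xi$ (so that the anticommutator yields the identity), and to guarantee that $\iota_x$ kills every relation monomial (so that $h$ descends). Both uses are clean consequences of $x$ lying in no $X_k$, which in turn is where quadraticity enters.
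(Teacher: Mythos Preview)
Your argument is correct. Contraction against the isolated generator gives a contracting homotopy on the full exterior algebra, and since $x$ appears in no relation monomial the homotopy descends to $\bar A$. The sign issue you flag is genuine but routine: with the paper's right-multiplication convention $d(u)=u\xi$ one should use the \emph{right} interior product (equivalently, twist $\iota_x$ by a degree sign) to obtain the clean identity $dh+hd=\mathrm{id}$; this is exactly the bookkeeping you anticipated and does not affect the structure of the proof.

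The paper itself gives no proof of this proposition --- it is stated and left. Given that Proposition~\ref{pr1} and its K\"unneth justification follow immediately, the intended argument is almost certainly: an isolated vertex forces $G=\{x\}\sqcup G'$, hence $A_G\simeq A_{\{x\}}\otimes A_{G'}$ and $H(A_G)=H(A_{\{x\}})\cdot H(A_{G'})$; the one-generator complex $k\xrightarrow{\cdot\, x}kx$ is visibly acyclic, so the product vanishes. Your route is more self-contained --- it does not invoke K\"unneth and it pinpoints precisely where the hypothesis is consumed (namely, $\iota_x$ kills every relation monomial, so the homotopy descends) --- while the implicit K\"unneth route has the virtue of reusing a statement the paper proves anyway. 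Either path amounts to the same observation: the isolated generator makes the complex a cone.
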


\begin{proposition}\label{pr1}
If there are connected components $G_1,...,G_n$ of $G:$ $G = \sqcup G_i$, then
$$H^{\bullet} (A_G)=
H^{\bullet} (A_{G_1})...H^{\bullet} (A_{G_N}).$$
\end{proposition}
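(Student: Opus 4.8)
The approach is to identify the complex $A_G$ with the Grassmann-algebra complex produced by the correspondence of Section~\ref{sec7}, and then to prove a Künneth-type factorization that is forced by the absence of edges between the components $G_i$. Recall that for the quadratic monomial operad encoded by $G$, the complex $A_G$ is computed inside $\bar A=A/M$, where $A$ is the Grassmann algebra on the variables $\{x_v:v\in X\}$ and $M$ is generated by the quadratic monomials $x_ix_j$ corresponding to the edges of $G$; the differential is $d(u)=u\,\xi$ with $\xi=\sum_{v\in X}x_v$. A (square-free) monomial $x_{i_1}\cdots x_{i_k}$ is nonzero in $\bar A$ precisely when no two of its indices are joined by an edge, i.e. when $\{i_1,\dots,i_k\}$ is an independent set of $G$. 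Thus a linear basis of $\bar A$ is indexed by the independent sets of $G$.

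First I would exploit the hypothesis $G=\sqcup_a G_a$ at the level of the underlying graded space. Since $G$ has no edges between distinct components, a subset of $X=\sqcup_a X_a$ is independent in $G$ if and only if each of its intersections with $X_a$ is independent in $G_a$. Consequently every basis monomial of $\bar A_G$ factors uniquely as a product of basis monomials coming from the separate components, and this yields an isomorphism of graded superalgebras
$$\bar A_G \;\cong\; \bar A_{G_1}\otimes\cdots\otimes \bar A_{G_N},$$
where $\otimes$ denotes the graded (Koszul-signed) tensor product of exterior algebras.

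Next I would verify that this isomorphism is compatible with the differentials, upgrading it to an isomorphism of complexes. The key observation is that $\xi$ splits as $\xi=\xi_1+\cdots+\xi_N$ with $\xi_a=\sum_{v\in X_a}x_v$ lying in the $a$-th factor. Since each $\xi_a$ is an odd (degree-one) element, right multiplication by $\xi_a$ on a product $u_1\cdots u_N$ moves $\xi_a$ past the later factors with the Koszul sign $(-1)^{\deg u_{a+1}+\cdots+\deg u_N}$, so that $d_G$ acts as $\sum_a(-1)^{(\cdots)}\,u_1\cdots(u_a\xi_a)\cdots u_N$. This is exactly the differential of the tensor product of the complexes $A_{G_a}$, each carrying its own differential $d_{G_a}(u_a)=u_a\,\xi_a$; the identity $d_G^2=0$ is recovered from $\xi^2=0$ together with $\xi_a\xi_b+\xi_b\xi_a=0$ for $a\neq b$. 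Hence
$$A_G \;\cong\; A_{G_1}\otimes\cdots\otimes A_{G_N}$$
as complexes of $k$-vector spaces.

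Finally, working over the field $k$ the Künneth theorem carries no Tor correction and gives
$$H^{\bullet}(A_G) \;\cong\; H^{\bullet}(A_{G_1})\otimes\cdots\otimes H^{\bullet}(A_{G_N}),$$
which is precisely the asserted product formula, the product of homologies being understood via this tensor product (equivalently, as the product of the corresponding Poincaré series). The only delicate point is the bookkeeping of Koszul signs when identifying $d_G$ with the tensor-product differential; once the graded tensor product of exterior algebras is set up correctly this step is routine, and everything else reduces to the standard Künneth isomorphism.
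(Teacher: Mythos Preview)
Your proof is correct and follows exactly the approach the paper indicates: the paper's entire argument is the one-line remark that the statement ``is a direct consequence of the K\"unneth theorem on the homology of the tensor product of complexes.'' You have simply supplied the routine verification that the paper leaves implicit, namely that the absence of edges between components forces $\bar A_G \cong \bar A_{G_1}\otimes\cdots\otimes\bar A_{G_N}$ as complexes (independent sets factor over components, and $\xi$ splits as $\sum_a\xi_a$), after which K\"unneth applies.
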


\proof It is a direct consequence of the K\"unneth theorem on the homology of the tensor product of complexes.

The following theorem provides a tool for homology calculations in some interesting examples.
The described below  transformations of the graph
$G_R$ correspond to the following operations on homologies.

\begin{theorem}\label{9.3}
Suppose there exists a vertex $x$, connected only to vertices $a_1,...,a_n$, which are  pairwise connected to each other (this set of vertices can consist of one element as well). Then
$$H^{\bullet} (A_G)=H^{\bullet} (A_{G_1}) \otimes ... \otimes  H^{\bullet} (A_{G_n}),$$
where $G_i$ is a graph obtained from $G$ by deleting all vertices connected to $a_i$.

Our convention is that when deleting a vertex, we delete all edges coming out of this vertex, but not the vertices on the other side of the edge.

\end{theorem}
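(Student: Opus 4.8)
The plan is to carry out everything in the Grassmann model furnished by the Proposition of Section~\ref{sec7}, so that $H^{\bullet}(A_G)$ is the homology of the exterior algebra $\bar A_G=\Lambda[x_v:v\in X]/(x_ix_j:\{i,j\}\text{ an edge of }G)$ equipped with the differential $d\xi=\xi\,(\sum_v x_v)$. A linear basis of $\bar A_G$ consists of the square-free monomials supported on the \emph{independent sets} of $G$, and $d$ is the signed ``multiply by the sum of the remaining generators'' map, so the whole question becomes purely combinatorial in $G$. The one structural fact I would record at the outset is that, since $N(x)=\{a_1,\dots,a_n\}$ is a clique and $x$ is joined to every $a_i$, the closed neighbourhood $N[x]=\{x,a_1,\dots,a_n\}$ is itself a clique; hence every basis monomial of $\bar A_G$ contains \emph{at most one} generator from $N[x]$. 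This is the property the entire argument rests on.

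Next I would peel off $x$ exactly as in the proofs of Theorem~\ref{th1.1} and its operadic analogue in Section~\ref{sec6}. Splitting $\bar A_G=E\oplus F$ according to whether a monomial is divisible by $x$ or not, and using the map $J$ that multiplies out $x$, realises the monomials divisible by $x$ as a subcomplex isomorphic (up to a degree shift) to $A_{G\setminus N[x]}$, with quotient complex $A_{G\setminus x}$. Inside $A_{G\setminus x}$ the clique $\{a_1,\dots,a_n\}$ persists, and because at most one $a_i$ occurs in any monomial I can treat all of them simultaneously: for each $i$ the monomials divisible by $a_i$ form a subcomplex, which a direct computation of $d$ (all terms $a_i(\cdots)a_j$ vanish by the clique relations) identifies, up to shift, with $A_{G_i}$, where $G_i=G\setminus N[a_i]$ is the link graph of $a_i$. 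I read the theorem's ``deleting all vertices connected to $a_i$'' as this link graph (removing $a_i$ along with its neighbours): a retained isolated $a_i$ would contribute only a trivial complex by Proposition~\ref{pr0}, so $G\setminus N[a_i]$ is the homologically meaningful object.

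With the pieces $A_{G_1},\dots,A_{G_n}$ in hand, the target is the Künneth assembly: the goal is to promote this monomial decomposition into a genuine tensor factorisation of complexes $A_G\simeq A_{G_1}\otimes\cdots\otimes A_{G_n}$, whereupon Proposition~\ref{pr1} (the Künneth theorem already used for disjoint unions) delivers $H^{\bullet}(A_G)=H^{\bullet}(A_{G_1})\otimes\cdots\otimes H^{\bullet}(A_{G_n})$. In executing this I would track the degree shifts carefully, since removing $x$ and each $a_i$ each contribute a shift, and check the Koszul signs so that the differential of $A_G$ agrees with the tensor-product differential built from the $d$ on each factor $A_{G_i}$; the degenerate configurations, where some $G_i$ acquires an isolated vertex, are disposed of directly by Proposition~\ref{pr0}.

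The hard part, and the step I expect to be the main obstacle, is precisely this promotion from an additive bookkeeping to a multiplicative one. The simplicial-vertex splitting of the second paragraph produces the $A_{G_i}$ glued along the common sub/quotient complex $A_{G\setminus N[x]}$, i.e.\ a priori through the connecting maps of a long exact sequence rather than through a tensor product; it is the clique hypothesis on $N(x)$ that must be leveraged to show that these connecting maps leave no surviving cross terms and that the distinct $G_i$ decouple as the factors of a join. Pinning down this decoupling, with the correct signs and degree shifts so that the factors genuinely multiply, is where the substance of the proof lies.
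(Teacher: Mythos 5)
Your Grassmann-model setup, the observation that $N[x]=\{x,a_1,\dots,a_n\}$ is a clique so that every basis monomial contains at most one generator from it, and your reading of ``deleting all vertices connected to $a_i$'' as passing to the link graph $G\setminus N[a_i]$ all agree with what the paper does. But the step you flag as the main obstacle --- promoting the ``additive bookkeeping'' to a tensor factorisation $A_G\simeq A_{G_1}\otimes\cdots\otimes A_{G_n}$ --- is not a gap to be closed: it is false, and the paper never attempts it. Despite the $\otimes$ in the display of Theorem~\ref{9.3} (a misprint, in effect, for a direct sum), the paper's proof is additive. Its key lemma shows that the span $Y_x$ of monomials \emph{not containing} $x$ but \emph{annihilated} by $x$ is a subcomplex quasi-isomorphic to the whole complex, by an explicit kernel computation ($\xi\in\ker D$ iff $\xi=D(f)+h$ with $h\in\ker D\cap Y_x$). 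Then, precisely because the $a_i$ are pairwise adjacent, no monomial of $Y_x$ can contain two of them, so the subcomplexes $Y_{x,i}$ of monomials containing $a_i$ (which are exactly the $a_i$-divisible pieces you identified) satisfy $Y_{x,i}\cap Y_{x,j}=0$ and $Y_x=\bigoplus_i Y_{x,i}$, with each $Y_{x,i}$ isomorphic, after fixing the factor $a_i$, to the complex of the link graph $G_i$. Hence $\dim H^{\bullet}(A_G)=\sum_i \dim H^{\bullet}(A_{G_i})$: the clique hypothesis buys \emph{disjointness} of the pieces, not a decoupling into tensor factors.

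That the conclusion must be a sum and cannot be a product is visible in the paper's own application: for the relation graph of the depth-two binary tree the computation reads $H=H(\hbox{triangle})+H(\hbox{edge})=2+1=3$, whereas a tensor factorisation would force $2\cdot 1=2$; likewise the master formula $\dim H(G_{n,x^{2^n}})=a_n\dim H(G_{0,xx})+b_n\dim H(G_{0,yy})+\cdots$ feeds Theorem~\ref{9.3} in additively. Your planned K\"unneth assembly via Proposition~\ref{pr1} is also structurally unavailable: that proposition applies to disjoint unions of relation graphs, while your $G_1,\dots,G_n$ overlap heavily inside $G$, so there are no independent sets of generators to tensor over. A secondary divergence: your peeling of $x$ through the short exact sequence with quotient $A_{G\setminus x}$ leaves connecting morphisms to control, and $A_{G\setminus x}$ is strictly larger than the relevant object (it contains monomials meeting no $a_i$, which are not annihilated by $x$); the paper's quasi-isomorphism lemma bypasses both problems by passing directly to $Y_x$. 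With that lemma in place, the subcomplexes you already constructed assemble additively and the proof closes --- no signs, shifts, or join decompositions are needed.
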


\begin{proof} The proof is a calculation of the kernel of the map of multiplication by $x$:
$\phi_x: A_G \to A_G: u \to xu$. This is analogous to the map $J: E \to F$ from ection \ref{sec2}. By arguments similar to those in Theorem~2.1 this kernel is a complex with the same homology as the  initial one. We present those arguments here in terms of a Grassmann algebra.

\begin{lemma}
Let $V=\{ x_1,...,x_N \}$ be the set of generators of the Grassmann algebra $Gr(V)$ and $X=Gr(V)/\lll r_1,...,r_m \rrr$
be a monomial quotient of $Gr(V)$ by quadratic monomials $r_i$.
It is also a complex with respect to the natural grading, with the derivation  $D: X \to X$, $D(\xi)=(x_1+...+x_N) \xi.$

Then the set
$Y_x=\{ m\in \lll V \rrr, m \neq 0$ in $X \, | \, x $
 is not present in $m, xm = 0$ in $X \}$
is  a subcomplex in $X$ which is quasi-isomorphic to $X$:
 $H^{\bullet}(X) \simeq H^{\bullet}(Y)$.

\end{lemma}

\begin{proof}
First, it is easy to see that $Y$ is indeed a subcomplex of $X$ ($D(Y) \subset Y$):
$D(m)=(x+x_2+...+x_N)m=x_2m+...+x_N m$  is the sum of x-free monomials, and since $xx_jm=-x_jxm=0$,
multiplying by $x$ from on the right produces zero. Thus, $D(m)$ is in $Y$.

Now, we want to show that $  H^{\bullet}(X) \simeq  H^{\bullet}(Y)$.
Let $\xi \in X$ be an element of $ker D$, $D:X \to X$, $\xi=\sum c_j m_j = xf+g$ where $f$ and $g$ are free from $x$.
If $D{\xi}=0$, we have
$$(x+x_2+...+x_N)(xf+g)=0,$$
which means that $(x_2+...+x_N)xf+xg=0$, as the x-part of the above expression, and $(x_2+...+x_N)g=0$ as an x-free part of the above expression. From the first equality, we have $g=(x_2+...+x_N)f+h$
 where $h\in Y$, then $(x_2+...+x_N)g= (x_2+...+x_N)^2f+(x_2+...+x_N)h=(x+x_2+...+x_N)h=D(h)$.
 Thus we see that $\xi \in ker D$ if and only if $\xi = D(f)+ h$, where $f\in X, h \in ker D \cap Y$.
 Hence $  H^{\bullet}(X) \simeq ker D_x \cap Y \simeq  H^{\bullet}(Y)$, and taking care of the grading it is also easy to show that these spaces are isomorphic as a graded vector spaces.
 \end{proof}

 Let $x$ now be a vertex in the graph $G_R$, connected to vertices $x_1,...x_n$, which are pairwise connected to each other. The subcomplex $Y_x=\{ m\in \lll V \rrr, m \neq 0$ in $X \, | \, x $
 is not present in $m, \, xm = 0$ in $X \}$ consists of sets of vertices (or monomials) which have a point from $V$ that is connected to $x$, (as otherwise, $xm \neq 0.$) If this point is $x_i$ we denote the corresponding subset of monomials $Y_{x,i} \subset Y_x$, and note that  $Y_{x,i}$ is a subcomplex in $Y_x$. The homology of $Y_x$ is in general a sum of homologies of $Y_{x,i}$. Taking into account that all vertices $x_i$ are pairwise connected to each other, we see that in fact $Y_x=\oplus Y_{x,i}$, where the direct sum is taken over $x_i$th that are connected to $x$. Thus $\dim H^{\bullet}(Y_x)= \sum \dim  H^{\bullet}(Y_{x,i})$.

 Each complex $Y_{x,i}$ is obviously isomorphic to the complex encoded by the graph of relations $G_i$, obtained from $G$ by deleting all vertices connected to $x_i$ (in a sense that deleting a vertex we delete all of edges connected to it, without deleting vertices on another side of these edges). Indeed, we obtain the same differential in both cases as multiplication by $x_i$ is zero for all of the points connected to $x_i$ in $G$.
 This completes the proof of the Theorem~\ref{9.3}

\end{proof}

\begin{example}\label{9.1}
 {\rm Let $T$ be an infinite binary tree and $T_n$ be  its subtree of depth $n$. Let ${\cal A}_n^{(3)}= T^n / {\cal J}_n^{(3)}$, where  ${\cal J}_n^{(3)}$ is obtained from the set of all subtrees of the binary tree, containing three operations:
$$
{\cal I}_n^{(3)}=\left\{{\vrule width0pt height 1.3cm}\smash{\raisebox{-1cm}{\PICONE}}\right\}.
$$

The ideal ${\cal J}_n^{(3)}$ is generated by all subtrees of elements of ${\cal I}_n^{(3)}$ obtained after cutting $T$ at the level of depth $n$.

For example, the ideal ${\cal J}_3^{(3)}$ is generated by all subtrees from the set
${\cal I}_3^{(3)}$ and those shown in the picture.}
$$
T_3=\smash{\raisebox{-1cm}{\PICTWO}}
$$

%{\rm pic 2.}

\begin{proposition}
$H^{\bullet}({\cal A}_n^{(3)})= 1$
\end{proposition}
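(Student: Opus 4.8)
The plan is to prove the statement by induction on the depth $n$, carrying out the computation on the Grassmann side. By the correspondence of Section~\ref{sec7}, the operadic complex attached to $T_n$ is quasi-isomorphic to the complex of a monomial quotient $\bar A$ of a Grassmann algebra whose generators are the internal vertices of $T_n$ and whose relations are the vertex sets of the forbidden subtrees in ${\cal I}_n^{(3)}$ cut at depth $n$; equivalently, this data is recorded by the relations graph $G_R(T_n)$. So it suffices to show $\dim H^{\bullet}(A_{G_R(T_n)})=1$. The structural input I would exploit is that $T_n$ is a root carrying two copies of $T_{n-1}$, and that this self-similarity is inherited by $G_R(T_n)$: it is assembled from two copies of $G_R(T_{n-1})$ together with the bottom-level triangles and the connecting edges created when the new top corollas are attached, exactly as in the picture of $\mathrm{GR}(T_n)$.

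For the inductive step I would feed $G_R(T_n)$ into Theorem~\ref{9.3}. First I would pick a bottom-level vertex $x$ whose neighbours $a_1,\dots,a_m$ are pairwise joined (these are the corolla-vertices sitting inside one of the bottom triangles), so that Theorem~\ref{9.3} applies and rewrites $H^{\bullet}(A_{G_R(T_n)})$ in terms of the homologies of the graphs $G_i$ obtained by deleting $a_i$ together with all of its neighbours. By Proposition~\ref{pr0} any $G_i$ that retains an isolated vertex contributes $0$, and by Proposition~\ref{pr1} the surviving $G_i$ factor over their connected components via the K\"unneth formula. The point I would then establish is that each surviving component is, up to isomorphism, the relations graph of a strictly smaller truncated tree, so that the induction hypothesis $\dim H^{\bullet}=1$ applies factor by factor. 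The noncommutative-monomial parametrisation enters here as the natural index for the components generated at each stage: a word in a two-letter alphabet encoding the successive left/right descents into $T_n$ labels the branch being peeled off, and the recursion is cleanest when organised along these words.

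The base cases $n=1$ and $n=2$ I would treat by hand; in particular for $T_2$ the Grassmann quotient carries a single relation coming from the whole tree, and a direct computation of the kind performed in the examples of Section~\ref{sec7} yields $H^{\bullet}=1$. The hard part will be the bookkeeping in the inductive step: one must show that the sum (resp.\ tensor product) produced by Theorem~\ref{9.3}, after discarding every factor annihilated by an isolated vertex through Proposition~\ref{pr0}, \emph{telescopes to a single one-dimensional class} rather than accumulating further homology — that is, that at each reduction exactly one branch survives with homology $1$ while all the others are killed. Pinning down precisely which vertices become isolated under the successive closed-neighbourhood deletions of Theorem~\ref{9.3}, and tracking this along the noncommutative-monomial index so that the collapse to $1$ is forced, is where the substance of the argument lies.
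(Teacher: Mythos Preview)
You have confused $\mathcal{A}_n^{(3)}=T_n/\mathcal{J}_n^{(3)}$ with the example $\mathcal{C}_n=T_n/\mathcal{I}_n^{(3)}$ that immediately follows it in the paper. The relations graph you describe---``two copies of $G_R(T_{n-1})$ together with the bottom-level triangles and the connecting edges''---is precisely the graph ${\rm GR}(T_n)$ drawn for $\mathcal{C}_n$, not for $\mathcal{A}_n^{(3)}$. The whole point of the ideal $\mathcal{J}_n^{(3)}$ is the \emph{cutting at depth $n$}: each bottom-level corolla $\Lambda$ becomes a single-vertex relation, and after passing to the reduced set of relations no relation contains both a bottom vertex and a higher vertex. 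Thus in the correct picture the bottom vertices are \emph{disconnected} components, not members of triangles. Your base case already reflects the confusion: for $\mathcal{A}_2^{(3)}$ the Grassmann quotient does not carry ``a single relation coming from the whole tree''; the cut adds single-vertex relations on the two bottom corollas.

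Because of this, the paper's argument is far simpler than the machinery you propose. One uses only Proposition~\ref{pr1} (K\"unneth): peel off a bottom $\Lambda$ (a one-point component, with $H(A_\Lambda)=1$), repeat, and end at the single top corolla. No appeal to Theorem~\ref{9.3}, no branch-counting, no noncommutative-monomial bookkeeping is needed. Conversely, your proposed induction cannot succeed on the graph you describe: for $\mathcal{C}_n$ the homology is \emph{not} one-dimensional (already $\dim H^\bullet(\mathcal{C}_3)=3$, as computed right after the proposition), so the claim that ``exactly one branch survives with homology $1$'' is false on that graph. The missing idea is simply to recognise that the cut relations disconnect the bottom level; once you see that, the proof collapses to iterated K\"unneth.
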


\end{example}

\proof We will first consider the full binary tree $T$, then we will ensure that the argument works for any binary tree.
Note first that we can consider a reduced set of relations, which generates the same ideal, but does not contain relations, which are subtrees of each other.
For example, out of the three relations shown in the picture above, we only take the last one to the reduced set of relations.

Now let $G$ be the whole graph and $G'$ be obtained from $G$ by deleting one of the binary operations $\Lambda$ (which is a relation), on the lower level of the tree.

Since it is a relation in a reduced set of relations, vertices of $G$ are divided in two subsets: vertices of $G'$ and vertex of $\Lambda$, so that there are no relations containing vertices from both subsets. This means that we have the situation of Prop.~9.2 $A_G=A_{G'} \otimes A_{\Lambda}$ and so we can apply the K\"uneth formula:
$$H(A_G)=H(A_{G'} \otimes A_{\Lambda})=H(A_{G'}) H(A_{\Lambda}).$$

Since $ H(A_{\Lambda})$ is one-dimensional, deleting  operation $\Lambda$ does not change the homology.
Using this procedure, starting with the full binary tree, we arrive at the tree, with only one corolla -  the binary operation $\Lambda$ at the top of the tree, whose homology is one-dimensional.

\begin{example}{\rm  Let ${\cal C}_n = T_n /{\cal I}_n^{(3)}$, where $T_n$ is a full binary tree of depth $n$.}
\end{example}

The homology of ${\cal C}_n$ depends on $n$ and grows quite fast.
 It can be calculated using Theorem~\ref{9.3}.
  The initial full binary tree $T_n$
 produces the graph of relations:

$$
\PICTHREE\,\qquad\qquad
$$

%pic 3.

 For example, let us calculate the homology of the full binary tree of depth 2, which has the following graph of relations:

$$
\PICFOUR
$$

%pic 4.

 Consider the vertex $a$ which is connected only to two vertices $b$ and $c$, which are connected to each other.
 Then

$$
\PICFIVE
$$

Homology of the triangle is equal to two, and homology of one edge is equal to one, which is easy to see either by direct calculations or considering again one vertex and reducing using Theorem~\ref{9.3} to the homology of empty set, which is equal to one.

 %pic 5.

\section {On the recurrent formula for $H(T_n)$.}

We are able to calculate $H^{(n)}$ for any $n$ by establishing the  recurrence relation on the array parametrised
by  a pair $n, w(x,y,z)$,
where $n$ is a natural number and $w(x,y,z)$ is a noncommutative monomial on three variables from $\langle  x,y,z \rangle.$

The following graph $G_{n,w(x,y,z)}$ corresponds to a pair $n,w(x,y,z)$:

$$
\PICFIFTEEN
$$

%huge

The graph for which we would like to know the value of the homology is $G_{n,x^{2^n}}$

%vfill\break

Using the main Theorem~\ref{9.3} on graph transformations we can obtain the following rewriting rule, which express homologies of  graphs of depth $n$ via homologies of graphs of depth $n-1$.

\begin{corollary}\label{rew} For any graph $G_{n,w(x,y,z)}$,
%\begin{align*}&
$$H(G_{n,w_1\dots w_{2^{n+1}}})$$
%\\&
$$=H(G_{(n-1), r(w_1w_2)\dots r(w_{2^{n+1}-1}w_{2^{n+1}})}),$$
%\end{align*}
where $r(x^2)=y+z$, $r(xy)=r(yx)=r(yz)=r(zy)=z$, $r(y^2)=0$ and $r(z^2)=x$.
\end{corollary}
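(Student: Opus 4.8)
The plan is to read this statement as a single systematic application of the main graph-transformation Theorem~\ref{9.3}, together with Propositions~\ref{pr0} and~\ref{pr1}, to the bottom level of the tree. First I would make precise the recursive structure of the graph $G_{n,w(x,y,z)}$ depicted above: it is a depth-$n$ binary layout whose lowest level is labelled, in order, by the $2^{n+1}$ letters $w_1\dots w_{2^{n+1}}$ of the word, and these bottom data are grouped into $2^n$ sibling pairs feeding into the vertices of the level directly above, which by themselves already form a copy of the level-$(n-1)$ layout. The three letters $x,y,z$ are a bookkeeping device recording the local connectivity of a bottom vertex in the relations graph $G_R$ — namely whether it is joined to its sibling and/or to the vertex sitting directly above the pair. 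With this dictionary fixed, reducing the depth from $n$ to $n-1$ amounts to eliminating all $2^n$ sibling pairs at once, and the letter attached to each resulting level-$(n-1)$ vertex is exactly the value $r(w_{2i-1}w_{2i})$.

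The key computational input is Theorem~\ref{9.3} applied to each sibling pair. For a fixed pair I would locate a vertex connected only to a set of pairwise-connected vertices and contract it; the combinatorics of the bottom level guarantees that this hypothesis is met and that distinct pairs may be treated independently, so the global homology factorises. Concretely, Proposition~\ref{pr1} (the K\"unneth formula) lets me take the tensor product over the $2^n$ pairs, and inside each factor Theorem~\ref{9.3} replaces the pair by a single new bottom vertex of the reduced graph $G_{n-1,\,\cdot}$, whose type I then claim is governed precisely by $r$.

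It remains to verify the five rewriting values one configuration at a time, which should be routine once the dictionary is fixed. The annihilating rule $r(y^2)=0$ is the case where the contracted pair produces an isolated vertex, so Proposition~\ref{pr0} forces the homology to vanish. The branching rule $r(x^2)=y+z$ is the case where the contracted vertex $a$ is joined to two pairwise-connected neighbours $a_1,a_2$; here the subcomplex $Y_a$ splits as the direct sum $Y_{a,1}\oplus Y_{a,2}$ exactly as in the proof of Theorem~\ref{9.3}, and the two summands yield the two monomials $y$ and $z$, so that $H$ distributes additively over $y+z$ (the corresponding dimensions add). The remaining rules $r(z^2)=x$ and $r(xy)=r(yx)=r(yz)=r(zy)=z$ are the cases where a single contraction deletes the neighbourhood of the relevant vertex and leaves one new bottom vertex whose connectivity is read off directly. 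Multiplying the outcomes across all $2^n$ pairs and expanding $r(w_1w_2)\cdots r(w_{2^{n+1}-1}w_{2^{n+1}})$ into a sum of monomials then gives the asserted identity, with $H$ of a formal sum understood as the direct sum of the homologies of its monomials.

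The main obstacle I anticipate is not any single contraction but pinning down the letter-to-connectivity dictionary so that it is simultaneously correct for every one of the finitely many sibling-pair configurations \emph{and} stable under the reduction — that is, so that after contracting one level the newly created vertices again carry one of the admissible types $x,y,z$ with the intended meaning, allowing the recursion to close. The second delicate point is the branching case: I must check that the decomposition $Y_a=Y_{a,1}\oplus Y_{a,2}$ is a genuine direct sum (the pairwise connectedness of the neighbours is exactly what guarantees this, through Theorem~\ref{9.3}) and that its two summands really correspond to the two distinct successor types $y$ and $z$ rather than to a single repeated one.
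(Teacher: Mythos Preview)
Your plan---apply Theorem~\ref{9.3} at the bottom level and verify the rewriting values by a finite case check---is precisely what the paper intends; indeed the paper supplies no argument beyond the single sentence that the corollary follows ``using the main Theorem~\ref{9.3} on graph transformations'', so your outline is already more detailed than the original.

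One point needs correcting, however. Your appeal to Proposition~\ref{pr1} to ``take the tensor product over the $2^n$ pairs'' cannot be made as stated: the bottom sibling pairs are not in distinct connected components of $G_R$, since each pair is wired upward into the common depth-$(n{-}1)$ skeleton, so the K\"unneth decomposition does not separate them. The multiplicative shape of the answer comes instead from \emph{iterating} Theorem~\ref{9.3}. You process one bottom pair, obtaining a direct sum of graphs indexed by the neighbours of the contracted vertex; in each summand the remaining bottom data are untouched, so you process the next pair there; and so on. After all $2^n$ passes the summands are labelled exactly by the monomials appearing when the product $r(w_1w_2)\cdots r(w_{2^{n+1}-1}w_{2^{n+1}})$ is formally expanded, and the additivity of $H$ over that formal sum is then automatic. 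Your two anticipated obstacles are exactly the right ones---in particular the stability of the $x,y,z$ dictionary under a single contraction is the heart of the verification---and once the K\"unneth step is replaced by this iterated use of Theorem~\ref{9.3}, your outline coincides with the paper's (implicit) argument.
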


The operation of decreasing depth of the graph of relations will lead to certain expression for the homology:

$$ \dim H(G_{n, x^{2^n}})=a_n \dim H(G_{0,xx}) + b_n  \dim H(G_{0,yy}) + c_n  \dim H(G_{0,zz})$$
 $$+ p_n  \dim (H(G_{0,xy})+H(G_{0,yx})) + q_n  \dim (H(G_{0,xz})+H(G_{0,zx})) +
r_n  \dim (H(G_{0,zy})+H(G_{0,yz})) .$$

We will write simply:
$$ x^{2^n}=a_n x^2+b_n y^2+...+r_n (zy+yz).
$$

Now we will obtain a recurrent formula (of length one) for coefficients
$a_n$, $b_n$, $c_n$, $p_n$, $q_n$ and $r_n$, using that

$$x^{2^{(n+1)}}=a_{n+1} x^2+...=(a_n x^2+...)^2=a_n x^4+...=a_n^2(y+z)^2+...$$
where the latter equation is written taking in account the rewriting rules from the corollary~\ref{rew}.

 The resulting recurrence relations are as follows:
\begin{align*}
&a_{n{+}1}=c_n^2,\ \ b_{n{+}1}=(a_n{+}2q_n)^2,
\\
&c_{n{+}1}=(a_n{+}2p_n{+}2r_n)^2,\ \ p_{n{+}1}=c_n(a_n{+}q_n),\ \
\\
&q_{n{+}1}=c_n(a_n{+}2p_n{+}2r_n),
\\
&r_{n+1}=(a_n+2q_n)(a_n{+}2p_n{+}2r_n)
\end{align*}
with initial conditions
$$
a_1=p_1=q_1=0,\quad b_1=c_1=r_1=1.
$$

Taking into account homologies $H(G{0,xx})$, etc., which are easy to calculate directly, we see that
the homology for an arbitrary $n$ is a sum of solutions of the above recurrence relations with coefficients as follows:
$$
{\rm dim} H^{(n)}= a_n+c_n+2p_n+2r_n.
$$

% \section{Funding}
%This work was supported  by the European Research Council [grant number 320974]; and the  Engineering and Physical Sciences Research Council %[grant EP/M008460/1].

 \section{Acknowledgements}
  We are grateful to IHES and MPIM for hospitality and support.
  %, and excellent research atmosphere.
 We would like to thank M.Kontsevich for number of enlightening discussions
  during various stages of the work on the paper. Moreover, the argument in the corollary~\ref{m}
is  suggested by M.Kontsevich.
This work was supported  by the European Research Council [grant number 320974]; and the  Engineering and Physical Sciences Research Council [grant EP/M008460/1].

\noindent N.Iyudu

\noindent Max-Planck-Institute f\"ur Mathematik

\noindent Vivatsgasse 7

\noindent 53111 Bonn

\noindent Germany

\noindent  E-mail address: \quad {iyudu@mpim-bonn.mpg.de}\\

\vspace{3mm}

\noindent  Y.Vlassopoulos

\noindent  Institut des Hautes \'Etudes Scientifiques,

\noindent 35 route de Chartres,

\noindent F - 91440 Bures-sur-Yvette

\noindent  E-mail address: \quad {yvlassop@ihes.fr}\\

\vspace{13mm}

\end{document}